\newcommand{\bd}{\begin{description}}
\newcommand{\ed}{\end{description}}
\newcommand{\bi}{\begin{itemize}}
\newcommand{\ei}{\end{itemize}}
\newcommand{\bt}{\begin{tabular}}
\newcommand{\et}{\end{tabular}}
\newcommand{\beq}{\begin{equation}}
\newcommand{\eeq}{\end{equation}}
\newcommand{\be}{\begin{enumerate}}
\newcommand{\ee}{\end{enumerate}}
\newcommand{\beqs}{\begin{eqnarray*}}
\newcommand{\eeqs}{\end{eqnarray*}}
\definecolor{DarkGreen}{rgb}{0.2, 0.6, 0.3}
\newtheorem{theorem}{Theorem}
\newtheorem{lemma}{Lemma}
\newtheorem{case}{Case}
\newtheorem{claim}{Claim}
\begin{document}
\title{\bf Gallai-Ramsey numbers of odd cycles \footnote{In July, 2018, Dr. Zixia Song claimed in a conference in Xining, China that she and her coauthors had obtained results for Gallai-Ramsey numbers of odd cycles. Their work was independent of the present work and their submission soon followed the initial submission of this manuscript.}\footnote{Supported by the National Science Foundation of China (Nos. 12061059, 11601254, 11551001, 11161037, 61763041, 11661068, and 11461054) and the Qinghai Key Laboratory of Internet of Things Project (2017-ZJ-Y21).}}

\author{
Zhao Wang\footnote{College of Science, China Jiliang University, Hangzhou 310018, China. {\tt wangzhao@mail.bnu.edu.cn}},
Yaping Mao\footnote{School of Mathematics and Statistics, Qinghai Normal University, Xining, Qinghai 810008, China. {\tt maoyaping@ymail.com}} \footnote{Academy of Plateau Science and Sustainability, Xining, Qinghai 810008, China},
Colton Magnant\footnote{Advanced Analytics Group, United Parcel Service, Atlanta, GA, 30328, USA. {\tt dr.colton.magnant@gmail.com}} \footnotemark[4],
\\
Ingo Schiermeyer\footnote{Technische Universit{\"a}t Bergakademie Freiberg, Institut f{\"u}r Diskrete Mathematik und Algebra, 09596 Freiberg, Germany. {\tt Ingo.Schiermeyer@tu-freiberg.de}},
Jinyu Zou\footnote{School of Computer, Qinghai Normal University, Xining, Qinghai 810008, China. {\tt zjydjy2015@126.com}}.
}

\maketitle

\begin{abstract}
Given two graphs $G$ and $H$ and a positive integer $k$, the
$k$-color Gallai-Ramsey number, denoted by ${\rm gr}_{k}(G : H)$, is
the minimum integer $N$ such that for all $n \geq N$, every
$k$-coloring of the edges of $K_{n}$ contains either a rainbow copy
of $G$ or a monochromatic copy of $H$. We prove that ${\rm gr}_{k}
(K_{3} : C_{2\ell + 1}) = \ell \cdot 2^{k} + 1$ for all $k \geq 1$
and $\ell \geq 6$.
\end{abstract}

\section{Introduction}\label{Sec:Intro}

All graphs considered in this work are simple, with no loops or
multiple edges. By a \emph{coloring} of a graph, we mean a coloring
of the edges of the graph. For a graph $G$, let $e(G)$ be the size
of $G$. A colored graph is called \emph{rainbow} if every edge
receives a distinct color. All standard notation used here comes
from \cite{CLZ11}.

Given two graphs $G$ and $H$ and a positive integer $k$, the
$k$-color Gallai-Ramsey number, denoted by ${\rm gr}_{k}(G : H)$, is
the minimum integer $N$ such that for all $n \geq N$, every
$k$-coloring of $K_{n}$ contains either a rainbow copy of $G$ or a
monochromatic copy of $H$.

Given two graphs $G$ and $H$, let $R(G, H)$ denote the $2$-color
Ramsey number for finding a monochromatic $G$ or $H$, that is, the
minimum number of vertices $n$ needed so that every red-blue
coloring of $K_{n}$ contains either a red copy of $G$ or a blue copy
of $H$.

The theory of Gallai-Ramsey numbers has grown by leaps and bounds in
recent years, especially for the case where $G = K_{3}$. Many
individual graphs have been solved for any number of colors but
classes of graphs have generally only been bounded from above and
below. We refer the interested reader to the survey \cite{MR2606615}
(with an updated version at \cite{FMO14}) for more general
information. In particular, the following has been shown for odd
cycles.

\begin{theorem}[\cite{FM11,MR3121112}]\label{Thm:OddBounds}
Given integers $n \geq 2$ and $k \geq 1$,
$$
n2^{k} + 1 \leq {\rm gr}_k(K_3:C_{2n + 1}) \leq (2^{k + 3} - 3)n
\log n.
$$
\end{theorem}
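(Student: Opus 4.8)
The statement splits into two halves that I would attack by unrelated methods, so I will treat them separately.

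For the lower bound $gr_k(K_3:C_{2n+1}) \geq n2^k + 1$, the plan is to exhibit, by induction on $k$, an explicit rainbow-triangle-free coloring $G_k$ of $K_{n2^k}$ containing no monochromatic $C_{2n+1}$; this certifies that $N = n2^k$ fails the defining property and hence that $gr_k \geq n2^k+1$. The base coloring $G_1$ is $K_{2n}$ with every edge in color $1$, which has no $C_{2n+1}$ for the trivial reason that it has only $2n$ vertices, and no rainbow triangle since it is monochromatic. Given $G_{k-1}$ on $n2^{k-1}$ vertices, I form $G_k$ from two disjoint copies of $G_{k-1}$ by coloring every edge between the two copies with a new color $k$. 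I would then verify the two properties. There is no rainbow triangle: a triangle lying inside one copy uses only the old colors and is non-rainbow by induction, while a triangle meeting both copies has two of its edges across the cut, both of color $k$. There is no monochromatic $C_{2n+1}$: for each old color $i<k$ the color-$i$ edges are confined to a single copy, so induction applies, and the color-$k$ edges form a complete bipartite graph $K_{n2^{k-1},\,n2^{k-1}}$, which is bipartite and therefore has no odd cycle whatsoever. This iterated bipartite doubling is exactly the extremal configuration, and it drives the $2^k$.

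For the upper bound $gr_k(K_3:C_{2n+1}) \leq (2^{k+3}-3)n\log n$, I would induct on $k$ and rely on Gallai's structure theorem: every rainbow-triangle-free coloring of $K_N$ with $N \geq 2$ admits a partition $V_1,\dots,V_t$ with $t \geq 2$ whose reduced complete graph $K_t$ uses at most two colors and in which all edges between any two parts receive a single color. Feeding this into the two-color cycle Ramsey value $R(C_{2n+1},C_{2n+1}) = 4n+1$ (valid since $2n+1 \geq 5$) produces a dichotomy. If the number of parts satisfies $t \geq 4n+1$, the two-colored reduced graph already contains a monochromatic $C_{2n+1}$, and selecting one vertex from each of the $2n+1$ relevant parts blows this reduced cycle up into a genuine monochromatic $C_{2n+1}$ in $K_N$, since consecutive parts are joined by monochromatic cuts; this finishes the case. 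Otherwise $t \leq 4n$, the largest part has at least $N/t \geq N/(4n)$ vertices, and the plan is to descend into that part.

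The genuine difficulty — and the source of the remaining loss — is that descending into a large part need not lower the number of available colors, because the two reduced colors may recur inside the parts, so a naive induction on $k$ via ``the biggest block'' breaks down. The remedy is to track a single color across levels: using the classical fact that in any two-coloring of $K_t$ one color class is connected and spanning, I would fix the majority reduced color, say red, and assemble the red cut-edges together with the red edges internal to parts into one large connected monochromatic subgraph. The hard step is then to extract an odd cycle of the \emph{exact} length $2n+1$ from such a subgraph. Here bipartiteness is the only obstruction (indeed the lower-bound coloring is purely an iterated bipartite join), so what is needed is a cycle-finding lemma asserting that a sufficiently large, sufficiently dense, non-bipartite monochromatic subgraph realizes all admissible odd cycle lengths up to its circumference, in particular $C_{2n+1}$. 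I expect this extraction to be the main obstacle: the available versions of such a lemma require the monochromatic host to have order about $n\log n$ rather than the truth-value order $n$, and it is precisely this $\log n$ inefficiency, compounded over the $k$ nested Gallai reductions that each roughly double the demand (mirroring the lower-bound doubling and yielding the leading constant $2^{k+3}-3$), that produces the stated bound $(2^{k+3}-3)n\log n$. Quantitatively controlling the loss at each level so that the accumulated estimate telescopes to this expression is where the real work lies, and closing the $\log n$ gap to the conjectured exact value is exactly the contribution of the present paper.
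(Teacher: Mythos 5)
First, a framing point: the paper offers no proof of this statement at all. Theorem~\ref{Thm:OddBounds} is imported by citation, with the lower bound coming from \cite{FM11} and the $(2^{k+3}-3)n\log n$ upper bound from \cite{MR3121112}, so the only benchmark is the literature. Your lower-bound half is complete, correct, and is precisely the standard construction behind the cited bound: iterated doubling with a fresh color on the cut, where the new color class is complete bipartite (hence contains no odd cycle), each old color is confined to a single copy (so induction applies), and any triangle meeting both copies has two cut edges of the same new color (so no rainbow triangle arises). This half needs no changes.

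The upper-bound half, by contrast, is an outline with the central step missing, and you acknowledge as much. The Gallai-partition dichotomy is sound: $R(C_{2n+1}, C_{2n+1}) = 4n+1$ for $n \geq 2$ by Theorem~\ref{Thm:2-color}, and a monochromatic $C_{2n+1}$ in the reduced graph lifts to $G$ by taking one vertex per part. But everything after the case $t \leq 4n$ hinges on an unstated and unproved cycle-extraction lemma --- that a certain large monochromatic subgraph contains a cycle of the \emph{exact} length $2n+1$ --- together with unspecified bookkeeping that is asserted, without computation, to telescope to $(2^{k+3}-3)n\log n$. Two concrete failures: (i) the ``large connected monochromatic subgraph'' you assemble (red cut edges plus red edges internal to parts) carries no density guarantee whatsoever, since parts may contain no red edges at all, and connectivity plus order alone do not force a cycle of a prescribed odd length; (ii) non-bipartiteness can genuinely fail --- in the extremal coloring of your own lower-bound construction \emph{every} color class is bipartite --- so at that point the argument must branch into a recursion across colors or levels, and it is exactly the quantitative accounting of that recursion (where the factors $2^{k}$ and $\log n$ actually arise) that constitutes the proof in \cite{MR3121112} and that your proposal explicitly defers (``where the real work lies''). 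As written, you have established only the lower bound $n2^{k} + 1 \leq gr_k(K_3 : C_{2n+1})$; the upper bound remains a research plan rather than a proof.
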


For the triangle $C_{3} = K_{3}$, several authors obtained the
following result.

\begin{theorem}[\cite{AI08,MR729784,GSSS10}]
For $k \geq 2$,
$$
{\rm gr}_{k}(K_{3} : K_{3}) = \begin{cases}
5^{k/2} + 1 & \text{ if $k$ is even,}\\
2 \cdot 5^{(k-1)/2} + 1 & \text{ if $k$ is odd}.\\
\end{cases}
$$
\end{theorem}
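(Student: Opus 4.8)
The plan is to recast the statement in extremal terms. Let $f(k)$ denote the largest $N$ for which $K_N$ admits a $k$-coloring with neither a rainbow triangle (a \emph{Gallai coloring}) nor a monochromatic triangle. Then $gr_k(K_3:K_3)=f(k)+1$, and since $5^{k/2}$ for even $k$ and $2\cdot 5^{(k-1)/2}$ for odd $k$ are exactly the values produced by the recursion $f(k)=5f(k-2)$ with $f(0)=1$ and $f(1)=2$, it suffices to establish that recursion together with the two base cases. I would prove it by strong induction on $k$, treating the lower bound by an explicit construction and the upper bound via the Gallai partition theorem.

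For the lower bound I would build an extremal coloring recursively. Color $K_5$ with two fresh colors as the standard Ramsey-critical coloring $C_5\cup C_5$ (a red pentagon and a blue pentagram), which has no monochromatic triangle and, using only two colors, no rainbow triangle. Then substitute into each of the five vertices a disjoint copy of the extremal coloring on $f(k-2)$ vertices realized with the remaining $k-2$ colors; the base objects are $K_2$ in one color (for odd $k$) and $K_1$ (for even $k$). A routine check shows the result is a Gallai coloring with no monochromatic triangle: any triangle meeting at least two blocks reuses a between-block color, and the two top colors appear only on the triangle-free reduced $C_5\cup C_5$. This gives $f(k)\ge 5f(k-2)$ and hence the lower bound $gr_k(K_3:K_3)\ge f(k)+1$.

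For the upper bound I would apply Gallai's structure theorem to any Gallai coloring of $K_N$ with no monochromatic triangle, obtaining a partition into parts $V_1,\dots,V_t$ ($t\ge 2$) whose reduced graph is $2$-colored, say in red and blue. A monochromatic triangle in the reduced graph lifts to one in $K_N$, so the reduced graph has no monochromatic triangle and therefore $t\le R(3,3)-1=5$. The crucial local observation is that if $V_i$ is incident to a red edge of the reduced graph then $V_i$ can contain no internal red edge (such an edge, together with any vertex of the red neighbor, forms a red triangle), and likewise for blue; hence $V_i$ uses at most $k-c_i$ colors internally, where $c_i\in\{1,2\}$ counts the colors incident to $V_i$, giving $|V_i|\le f(k-c_i)$ by induction. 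It remains to verify $\sum_{i=1}^{t} f(k-c_i)\le 5f(k-2)=f(k)$ over all admissible reduced graphs. The key structural fact that drives this is that a reduced vertex incident to only one color would force the remaining parts into a monochromatic clique in the other color, impossible once $t\ge 4$; thus for $t\in\{4,5\}$ every part has $c_i=2$ and the bound is immediate, while the cases $t\in\{2,3\}$ (where a $c_i=1$ part can occur) involve at most three summands and are checked directly against the parity-dependent formula.

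The main obstacle I anticipate is the bookkeeping in this final case analysis: one must confirm the inequality $\sum f(k-c_i)\le f(k)$ separately for even and odd $k$, since the closed form for $f$ switches parity and the bounds $2f(k-1)$ (from $t=2$) and $5f(k-2)$ (from $t=5$) meet the target with equality in different parities. The secondary care-point is invoking the Gallai partition theorem cleanly and checking the base cases $f(0)=1$, $f(1)=2$, and $f(2)=5$ so that the strong induction, which calls on $f(k-1)$ and $f(k-2)$, is properly anchored.
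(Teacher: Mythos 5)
Your proof is correct, but note that the paper does not prove this theorem at all: it is quoted as a known result from the cited references (Chung--Graham, Gy\'arf\'as--S\'ark\"ozy--Seb\H{o}--Selkow, Axenovich--Iverson). Your argument --- the lower bound by recursively substituting extremal $(k-2)$-colorings into the $2$-colored $K_5$ with complementary pentagons, and the upper bound by induction via the Gallai partition with $t \le R(3,3)-1 = 5$ parts, the observation that a part incident to a color in the reduced graph cannot contain that color internally, and the case check $\sum_i f(k-c_i) \le f(k)$ for $t \in \{2,3,4,5\}$ --- is precisely the standard proof found in those sources, and your case analysis (including the fact that for $t \ge 4$ every part sees both colors) checks out in both parities.
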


For the $C_{5}$, Fujita and Magnant obtained the following.

\begin{theorem}[\cite{FM11}]
For any positive integer $k \geq 2$, we have
$$
{\rm gr}_k(K_3:C_5)=2^{k+1}+1.
$$
\end{theorem}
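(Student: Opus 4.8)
The plan is to establish the two matching bounds separately, carrying out the upper bound by induction on $k$.

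\emph{Lower bound.} To see $gr_k(K_3:C_5) \ge 2^{k+1}+1$, I would build a rainbow-triangle-free $k$-coloring of $K_{2^{k+1}}$ with no monochromatic $C_5$ by iterated doubling. Let $H_1$ be $K_4$ with all edges in color $1$; since $H_1$ has only four vertices it contains no $C_5$. Given $H_{i-1}$ on $2^{i}$ vertices using colors $1,\dots,i-1$, let $H_i$ consist of two disjoint copies of $H_{i-1}$ together with all edges between the two copies in a new color $i$. Then $H_i$ has $2^{i+1}$ vertices, and the edges of color $i$ form a complete bipartite graph, hence contain no odd cycle and in particular no $C_5$, while the other colors live inside the copies, which contain no monochromatic $C_5$ by induction. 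No triangle is rainbow, because any triangle meeting both top-level copies uses color $i$ on two of its edges, and inductively the copies are rainbow-triangle-free. Taking $i=k$ gives the required coloring of $K_{2^{k+1}}$.

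\emph{Upper bound.} For $gr_k(K_3:C_5)\le 2^{k+1}+1$ I would induct on $k$. The base case $k=2$ is the classical Ramsey number $R(C_5,C_5)=9=2^{3}+1$, valid because every $2$-coloring is automatically rainbow-triangle-free. For the inductive step, take a rainbow-triangle-free $k$-coloring of $K_n$ with $n=2^{k+1}+1$ and apply Gallai's structure theorem to obtain a Gallai partition $V_1,\dots,V_t$ with $t\ge 2$ whose reduced complete graph $R$ on $t$ vertices uses only two colors, say red and blue; I choose the partition with $t$ minimum and order the parts so that $|V_1|\ge\cdots\ge|V_t|$. A monochromatic $C_5$ in $R$ lifts immediately to a monochromatic $C_5$ of $K_n$ by selecting one vertex from each of the five corresponding parts, so I may assume $R$ has no monochromatic $C_5$; since $R(C_5,C_5)=9$, this forces $t\le 8$.

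The core of the argument is the case $t=2$, where $V_1$ and $V_2$ are joined entirely in red and $|V_1|\ge 2^{k}+1$. If $V_1$ uses at most $k-1$ colors, the induction hypothesis $gr_{k-1}(K_3:C_5)=2^{k}+1$ already yields a monochromatic $C_5$ inside $V_1$. Otherwise $V_1$ uses all $k$ colors, so red appears inside $V_1$; provided $|V_2|\ge 2$, a red edge $a_1a_2$ of $V_1$ together with a third vertex $a_3\in V_1$ and two vertices $b_1,b_2\in V_2$ yields the red $C_5$ given by $a_1a_2b_1a_3b_2a_1$. This reduces the analysis to configurations that mimic the extremal coloring, namely a dominating part $V_2$ of size $1$ (and, for $t\ge 3$, the remaining small reduced colorings), in which one must convert odd substructures of $R$ into a monochromatic $C_5$: a red triangle on parts of sizes at least $2,2,1$ gives a red $C_5$ exactly as above, and a red path on four vertices inside a part closes up through a dominating vertex. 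The main obstacle is precisely this residual analysis for small $t$ and for a dominating singleton part, where no large part is available for the induction and the cycle is too short to be routed through long paths inside the parts; here one must show that avoiding a monochromatic $C_5$ forces the near-bipartite structure of the extremal coloring and thereby pins $n\le 2^{k+1}$. This short-cycle rigidity is exactly what separates the $C_5$ case $\ell=2$ from the generic $\ell\ge 3$ argument, and it is the reason $C_5$ is treated on its own.
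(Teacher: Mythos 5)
First, a point of comparison: the paper does not prove this statement at all --- it is quoted from \cite{FM11} --- so your proposal can only be measured against the strategy of that paper and of Section~\ref{Sec:OddCyclesPf}. Your lower bound is correct and complete: the iterated doubling construction is exactly the standard one behind the lower bound of Theorem~\ref{Thm:OddBounds} (with $n = 2$), and your base case $k = 2$ correctly reads $R(C_5, C_5) = 9$ off Theorem~\ref{Thm:2-color}. The reduction for $t = 2$ with $|V_2| \geq 2$ is also sound: either $V_1$ misses a color and the induction hypothesis applies to its $2^{k} + 1$ vertices, or a red edge inside $V_1$ closes into a red $C_5$ through two vertices of $V_2$.

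The genuine gap is that the upper bound is never finished: you explicitly defer the cases carrying all the difficulty --- a dominating singleton part, and the reduced colorings with $3 \leq t \leq 8$ --- and you name them as ``the main obstacle'' without resolving them. Concretely, in the singleton case what your argument actually yields is that the red graph inside $V_1$ (now of order $2^{k+1}$) is $P_4$-free, so its components are stars and triangles; but $V_1$ may genuinely use all $k$ colors, so the induction hypothesis does not apply to it, and you supply no mechanism --- deleting a bounded set of vertices, recoloring a sparse color in the spirit of Lemma~\ref{Lemma:SmallOutside}, or a direct structural argument --- for converting ``red is $P_4$-free on $V_1$'' into the bound $n \leq 2^{k+1}$. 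Similarly, for $3 \leq t \leq 8$ you exhibit sample liftings (a red triangle on parts of sizes $2, 2, 1$; a red $P_4$ closed through a dominating vertex) but never show that \emph{every} monochromatic-$C_5$-free two-coloring of the reduced graph on at most eight parts, with the actual part sizes that can occur, forces such a configuration or forces a large part missing a color. That exhaustive analysis is precisely the body of the proof in \cite{FM11}, playing the role that Claims~\ref{Claim:Tissmall}--\ref{Claim:LargestPartn/2} play here for $\ell \geq 3$; your observation that short-cycle rigidity is what makes $\ell = 2$ special is a correct diagnosis, but a proof that identifies its hardest step and stops there is an outline, not a proof.
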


More recently, other authors have extended these results to longer
odd cycles in a sequence of papers.

\begin{theorem}[\cite{Song2, Song1}]
For integers $3 \leq \ell \leq 5$ and $k \geq 1$, we have
$$
{\rm gr}_{k} (K_{3} : C_{2\ell + 1}) = \ell \cdot 2^{k} + 1.
$$
\end{theorem}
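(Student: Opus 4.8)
The lower bound $\ell \cdot 2^{k} + 1 \le gr_k(K_3 : C_{2\ell+1})$ is already supplied by Theorem~\ref{Thm:OddBounds} (taking $n = \ell$), so the entire task is to establish the matching upper bound $gr_k(K_3 : C_{2\ell+1}) \le \ell \cdot 2^{k} + 1$. The plan is to prove, by induction on $k$, that every $k$-coloring of $K_N$ with $N = \ell \cdot 2^{k} + 1$ and no rainbow triangle contains a monochromatic $C_{2\ell+1}$. For the base case $k = 1$ the whole graph is a monochromatic $K_{2\ell+1}$, which certainly contains $C_{2\ell+1}$, so all of the content lives in the inductive step.

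For the inductive step I would invoke the structure theorem for colorings of complete graphs with no rainbow triangle: the vertex set of $K_N$ admits a partition into parts $V_1, \dots, V_m$ with $m \ge 2$ such that between any two parts exactly one color appears, and the resulting reduced complete graph $R$ on $m$ vertices uses at most two colors, say red and blue. I would choose such a partition with $m$ minimum and order the parts so that $n_1 = |V_1| \ge \dots \ge n_m = |V_m|$. The argument then splits according to the coarse size of $m$.

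\textbf{Many parts.} If $m \ge 4\ell + 1$, then the two-colored reduced graph $R$ contains a monochromatic $C_{2\ell+1}$ by the classical Ramsey identity $R(C_{2\ell+1}, C_{2\ell+1}) = 4\ell + 1$; choosing one vertex from each part visited by that cycle yields a monochromatic $C_{2\ell+1}$ in $K_N$, since every cross-part edge respects the reduced colors. \textbf{Few parts.} If $m \le 4\ell$, then $n_1 \ge N/m$ is large, and the goal is to reduce the number of colors. Grouping the remaining parts according to whether they join $V_1$ in red or in blue, I would argue that one of the two monochromatic ``blow-up'' structures hanging off $V_1$ is non-bipartite and carries enough vertices that its blow-up contains a monochromatic odd cycle of exactly length $2\ell+1$; failing that, the interaction with $V_1$ confines the coloring so that a single part, or a union of parts, is colored with at most $k-1$ colors on at least $\ell \cdot 2^{k-1} + 1$ vertices, where the induction hypothesis applies directly.

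The main obstacle is the few-parts regime, and it involves two intertwined difficulties. First, realizing a monochromatic odd cycle of the exact length $2\ell+1$ inside the blow-up of a non-bipartite red (or blue) subgraph of $R$ requires a pancyclicity-type lemma for blow-ups of odd cycles, together with an exact accounting that the relevant parts jointly contribute at least $2\ell+1$ vertices; the hypothesis $\ell \ge 3$ should enter here to rule out short-cycle exceptions. Second, the color-reduction step must be arranged so that the doubling $\ell \cdot 2^{k} + 1 \mapsto \ell \cdot 2^{k-1} + 1$ is tight: I expect the decisive inequality to be that the largest relevant monochromatic piece retains more than half of the $N$ vertices, which is precisely what allows the factor of $2$ in $2^{k}$ to propagate cleanly through the induction. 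The most delicate point to verify carefully is the degenerate case in which both color classes of $R$ are bipartite, so that no odd cycle is produced by the reduced graph at all; I would need to show that this scenario forces enough vertices into a single part to close the induction via the inductive hypothesis.
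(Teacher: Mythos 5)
Your proposal has the right high-level skeleton, and it is essentially the skeleton this paper uses for its general theorem in Section~\ref{Sec:OddCyclesPf}: induction on $k$, a Gallai partition with a minimum number of parts, the identity $R(C_{2\ell+1},C_{2\ell+1})=4\ell+1$ from Theorem~\ref{Thm:2-color} to dispose of the many-parts case, and a color-reduction step to feed the induction in the few-parts case. (Note that the paper itself does not prove this particular statement; it cites it from the Bruce--Song and Bosse--Song papers on $C_7$, $C_9$, $C_{11}$, and the overlap $\ell\in\{3,4,5\}$ is subsumed by the paper's Theorem~\ref{Thm:OddCycles}. Also, the statement as written would include $\ell=1$, where the formula is false for $K_3$, and your many-parts step needs the exclusion $(m,n)\neq(3,3)$ in Theorem~\ref{Thm:2-color}, so any proof must implicitly assume $\ell\geq 2$.)

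The genuine gap is the few-parts regime, which is where the entire content of the theorem lives, and the sketch you give of it would fail as stated. The central obstruction is that a part (or union of parts) of a Gallai partition whose \emph{cross} edges avoid red may still contain red edges \emph{inside} the parts; a single part can in fact use all $k$ colors. So your key claim --- that grouping parts around $V_1$ yields ``a single part, or a union of parts, colored with at most $k-1$ colors on at least $\ell\cdot 2^{k-1}+1$ vertices'' --- cannot be arranged by grouping alone, and the induction hypothesis cannot be applied to such a set directly. Overcoming this is exactly what the paper's machinery is for: cycle constructions showing that a sufficiently large part with enough red-attached vertices outside cannot contain a red edge; the recoloring lemma (Lemma~\ref{Lemma:SmallOutside}), which handles small parts that may use extra colors by recoloring them without creating a monochromatic $C_{2\ell+1}$; the maximal attachment set $T=T_1\cup\dots\cup T_k$ with claims bounding each $|T_i|$; and auxiliary results (Lemma~\ref{Lemma:ColE-G}, Theorem~\ref{Thm:EvenUp}, Dirac's theorem) to close the residual case $k=3$. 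Your accounting heuristic is also off: the tight step is not that one monochromatic piece retains more than half of the $N$ vertices, but that the vertex set decomposes into pieces, each missing a color and hence of size at most $gr_{k-1}(K_3:C_{2\ell+1})-1=\ell\cdot 2^{k-1}$ (or $gr_{k-2}-1$, etc.), whose sizes sum to at most $\ell\cdot 2^{k}<N$, giving the contradiction. Since you explicitly defer both the exact-length odd-cycle extraction from blow-ups and the degenerate case where both reduced color classes are bipartite (``I would argue,'' ``I expect''), what you have is a plausible outline of the known strategy, with the hard part missing.
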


Based on these results, it has been widely believed that the lower
bound of Theorem~\ref{Thm:OddBounds} is indeed the correct value. In
this paper, we confirm that belief with the following, our main
result.

\begin{theorem}\label{Thm:OddCycles}
For integers $\ell \geq 6$ and $k \geq 1$, we have
$$
{\rm gr}_{k} (K_{3} : C_{2\ell + 1}) = \ell \cdot 2^{k} + 1.
$$
\end{theorem}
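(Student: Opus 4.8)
The plan is to establish the two inequalities separately. The lower bound $gr_k(K_3 : C_{2\ell+1}) \geq \ell \cdot 2^k + 1$ is already known from Theorem~\ref{Thm:OddBounds}, so the entire work lies in proving the matching upper bound: every $k$-coloring of $K_n$ with $n = \ell \cdot 2^k + 1$ that contains no rainbow triangle must contain a monochromatic $C_{2\ell+1}$. I would proceed by induction on $k$. The base case $k = 1$ is immediate, since a $1$-coloring of $K_{2\ell+1}$ is a monochromatic complete graph, which certainly contains $C_{2\ell+1}$ for $\ell \geq 3$.

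The key structural tool for the inductive step is the Gallai partition: the assumption that $K_n$ has no rainbow triangle means, by Gallai's theorem on edge-colorings of complete graphs, that the vertex set admits a nontrivial partition $V_1, \dots, V_m$ (with $m \geq 2$) into parts such that between any two parts only a single color appears, and the \emph{reduced graph} on $\{V_1,\dots,V_m\}$ — where each edge is colored by the unique color between the corresponding parts — uses at most two colors total. First I would take a Gallai partition with the smallest number of parts $m$, and study the reduced $2$-colored complete graph $K_m$. The essential idea is that the two colors in the reduced graph, together with the sizes $|V_i|$, let one either build a long monochromatic cycle directly across parts, or push the problem down to a single part of smaller size where the inductive hypothesis (fewer colors, or a size bound) applies. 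One typically splits into cases according to $m$: small values of $m$ (say $m=2$, or $m$ bounded by a constant) are handled by hand, exploiting that one part must be large (of size roughly $n/2 \geq \ell \cdot 2^{k-1}$), while for larger $m$ one uses the two-coloring of the reduced graph to find a monochromatic structure spanning enough parts to assemble a cycle of the required length $2\ell+1$.

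The main technical engine will be a lemma guaranteeing that a monochromatic connected subgraph spanning a collection of parts of total order at least $\ell \cdot 2^{k} + 1$, or more precisely spanning enough vertices with a suitable color-degree or two-coloring condition, contains a monochromatic $C_{2\ell+1}$. Here I would want to exploit the freedom that $C_{2\ell+1}$ is an \emph{odd} cycle: in the reduced two-colored graph, if one color class is bipartite it cannot help produce an odd cycle, forcing the other color to carry the odd structure, and this parity obstruction is exactly what pins the extremal number down to $\ell \cdot 2^k + 1$ rather than something larger. Concretely I would track, through the recursion, a weighted count where each part of size $s$ is thought of as contributing according to which color classes it can extend, and arrange the case analysis so that whenever no monochromatic $C_{2\ell+1}$ appears, the total number of vertices is forced to be at most $\ell \cdot 2^k$, contradicting $n = \ell \cdot 2^k + 1$.

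I expect the main obstacle to be the case analysis on the reduced two-coloring, specifically controlling how the part sizes distribute between the two colors so that neither color can be simultaneously denied a long odd cycle. The delicate point is assembling a cycle of \emph{exactly} length $2\ell+1$ rather than merely a long cycle: one must verify that the relevant monochromatic subgraph across the parts has enough connectivity and enough vertices of each parity type to realize an odd cycle of precisely that order, which requires care when some parts are large (so vertices within a part may be joined only via other colors) and the inductive hypothesis must be invoked on a part with a reduced color count. Managing the interaction between the within-part coloring (handled by induction) and the between-part reduced coloring (handled by the parity argument), while keeping the arithmetic $\ell \cdot 2^{k-1} \to \ell \cdot 2^{k}$ tight at every branch, is where the real difficulty lies.
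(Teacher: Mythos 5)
Your overall frame---lower bound from Theorem~\ref{Thm:OddBounds}, induction on $k$, a Gallai partition with a minimum number of parts, case analysis on the reduced $2$-coloring---is the same skeleton the paper uses, but the two places where you defer the work are exactly where the proof lives, and as written neither has a working mechanism. The first gap is that your inductive step has no valid way to drop a color. You propose to ``push the problem down to a single part of smaller size where the inductive hypothesis (fewer colors, or a size bound) applies,'' but a part of a Gallai partition can use all $k$ colors internally, so neither a reduced color count nor the bound $\ell \cdot 2^{k-1}+1$ applies to it as such. The paper makes this reduction legitimate with two devices absent from your plan: (i) a maximal set $T = T_{1} \cup \dots \cup T_{k}$ of vertices attached monochromatically in color $i$ to the rest of the graph, with Claim~\ref{Claim:Tissmall} showing $|T_{i}| \leq \ell$ for all $i$ and that some $T_{i}$ is empty; and (ii) Lemma~\ref{Lemma:SmallOutside}, which recolors the interiors of small attached sets so that a genuine $(k-1)$- or $(k-2)$-coloring results and $gr_{k-1}$ or $gr_{k-2}$ can be invoked, with the resulting sizes summing to less than $\ell \cdot 2^{k}+1$. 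Without something of this kind, the additive accounting you describe (your ``weighted count'' forcing $n \leq \ell \cdot 2^{k}$) is a restatement of the goal, not an argument.

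The second gap is the one you yourself flag: producing a cycle of length \emph{exactly} $2\ell+1$. The bipartite/parity observation you lean on explains why the lower-bound construction is extremal; it gives no upper-bound mechanism. The paper's mechanism is concrete and is the real engine of the proof: after reducing to $k=3$, $n = 8\ell+1$, and all parts of order at most $\ell/2$ (Claims~\ref{Claim:Tl2} and~\ref{Claim:LargestPartn/2}), it reserves a constant set of special vertices (an edge in the third color, vertices of the largest part $H_{1}$, and endpoints or initial segments of longest red and blue paths in $G_{R}$ and $G_{B}$), finds a monochromatic copy of $C_{2\ell - 2}$ in what remains using the even-cycle bound $gr_{3}(K_{3} : C_{2\ell - 2}) \leq 6\ell - 9$ from Theorem~\ref{Thm:EvenUp}, and then re-routes that even cycle through the reserved vertices to lengthen it by exactly three. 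Supporting this are Dirac's theorem in the form of Lemma~\ref{Lemma:2l+1}, the two-colored Erd\H{o}s--Gallai path lemma (Lemma~\ref{Lemma:ColE-G}), and the exact two-color cycle Ramsey numbers of Theorem~\ref{Thm:2-color} applied to the reduced graph on parts of order $1$ and $2$. None of these ingredients, nor substitutes for them, appear in your proposal, so what you have is a correct statement of strategy with the essential content still missing.
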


\section{Preliminaries}\label{Sec:Prelim}

We begin this section with the fundamental tool in the study of
colored complete graphs containing no rainbow triangle. Here a
colored complete graph is called \emph{Gallai colored} if it
contains no rainbow triangle.

\begin{theorem}[\cite{MR1464337,MR0221974,MR2063371}]\label{Thm:G-Part}
In any Gallai colored complete graph, there exists a partition of
the vertices (called a \emph{Gallai partition}) such that there are
at most two colors on the edges between the parts and only one color
on edges between each pair of parts.
\end{theorem}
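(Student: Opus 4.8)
The plan is to prove this structure theorem by the method of \emph{homogeneous sets} (modules), which reduces the global statement to an ``irreducible'' core where the forbidden rainbow triangle can be exploited directly. Call a set $M \subseteq V(K_n)$ with $2 \le |M| \le n-1$ a \emph{module} if every vertex $x \notin M$ is joined to all of $M$ in a single color, where that color may depend on $x$. First I would dispose of the degenerate cases: if $n \le 2$, or if the whole coloring uses at most two colors, then the partition of $V$ into singletons already meets the conclusion, since each pair of parts is a single edge and only the at most two global colors appear. So I may assume $n \ge 3$.

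Next I would produce the partition. Taking the coarsest nontrivial partition $V = V_1 \cup \dots \cup V_p$ with $p \ge 2$ into maximal proper modules---which exists for every $n \ge 2$---two facts are routine. First, the edges between any two parts are monochromatic: if $x,x' \in V_i$ and $y,y' \in V_j$, then the module property of $V_j$ gives $c(xy)=c(xy')$ and the module property of $V_i$ gives $c(xy)=c(x'y)$, so all four edges share one color. This lets me define the \emph{reduced graph} $R$ on the $p$ parts, coloring $ij$ by the common color between $V_i$ and $V_j$. Second, $R$ is again Gallai colored: a rainbow triangle on parts $V_i,V_j,V_k$ would, after choosing one vertex in each part, lift to a rainbow triangle of $K_n$. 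Thus the entire theorem comes down to bounding the number of colors appearing on $R$.

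The hard part will be exactly this last bound: $R$ uses at most two colors. By the maximality of the modules, $R$ is either monochromatic, in which case we are already done, or \emph{prime}, meaning its only modules are trivial. The crux is therefore the structural claim that a prime Gallai coloring of a complete graph can use at most two colors---and this is precisely where the absence of rainbow triangles is indispensable, since a prime coloring carrying no such constraint may use arbitrarily many colors. I expect the cleanest route to this claim to pass through Gallai's theory of comparability graphs: one associates to the coloring the transitive orientations arising from its color classes and argues that a primitive configuration admitting three distinct colors necessarily produces three vertices spanning three colors, i.e.\ a rainbow triangle. A self-contained alternative would fix a color appearing on $R$, examine the connected components of that color class together with those of its complement, and run an induction that either splits off a further module (contradicting primeness) or exhibits a rainbow triangle directly; making this dichotomy airtight, rather than the bookkeeping around it, is where the real difficulty lies. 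Once $R$ is shown to be $2$-colored, combining it with the two routine facts above closes the argument.
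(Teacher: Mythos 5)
First, a point of comparison: the paper does not prove this statement at all --- it is quoted as a known theorem of Gallai (in the reformulation of Gy\'arf\'as--Simonyi, with an alternative treatment by Cameron--Edmonds) and then ``used as a matter of fact.'' So the relevant question is whether your proposal constitutes a proof on its own, and it does not. Your framework is the right one and matches the literature: the reduction via modules (homogeneous sets), the observation that edges between two parts of a module partition are monochromatic, and the observation that the quotient inherits the rainbow-triangle-free property are all correct and genuinely routine. But the step you flag as ``the hard part'' --- that a \emph{prime} Gallai-colored complete graph uses at most two colors --- is not a lemma feeding into the theorem; it \emph{is} the theorem. Indeed, applying the statement of the theorem to a prime coloring forces the Gallai partition to be the partition into singletons, so the two assertions are equivalent up to the standard modular-decomposition bookkeeping. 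Your proposal therefore reduces the theorem to itself, and at the crucial point offers only two speculative pointers: the appeal to Gallai's comparability-graph theory is stated as ``a primitive configuration admitting three colors produces a rainbow triangle,'' which restates the goal rather than arguing it (the actual derivation in Gy\'arf\'as--Simonyi goes through Gallai's theorem on transitive orientations and is nontrivial), and the sketched induction on color-class components is explicitly left with its central dichotomy (``splits off a module or exhibits a rainbow triangle'') unverified. Without one of these routes carried out, nothing beyond standard decomposition theory has been established.

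A secondary, fixable issue: your construction ``the coarsest nontrivial partition into maximal proper modules'' is not well defined as stated, because maximal proper modules of an edge-colored complete graph can overlap --- in a monochromatic $K_3$, every $2$-subset is a maximal proper module. Since the union of two overlapping modules is again a module, overlap forces that union to be all of $V$, which is exactly the degenerate case; the clean statement requires \emph{strong} modules (those overlapping no other module) or an explicit case split, as in the Ehrenfeucht--Rozenberg decomposition for $2$-structures that Cameron--Edmonds's treatment implicitly uses. With that repair your dichotomy ``quotient is monochromatic or prime'' is correct, but the main gap above remains: the two-color bound in the prime case is the entire content of the result and must be proved, not deferred.
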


We will use this result as a matter of fact. Since a subgraph
consisting of an arbitrarily selected vertex from each part of a
Gallai partition is a $2$-colored complete graph, we call this
subgraph the \emph{reduced graph} of the Gallai partition.

\begin{lemma}\label{Lemma:SmallOutside}
Let $k \geq 1$ be an integer, $H$ be a graph with $|H| = m$, and let
$G$ be a complete graph $K_{n}$ with a Gallai coloring containing no
monochromatic copy of $H$. If the vertex set is partitioned into
$V(G) = A \cup B_{1} \cup B_{2} \cup \dots \cup B_{k - 1}$ where the
set $A$ uses at most $k$ colors (say from the set $[k]$ of integers
from $1$ to $k$), $|B_{i}| \leq m - 1$ holds for all $i$, and all
edges between $A$ and $B_{i}$ have color $i$, then $n \leq {\rm
gr}_{k}(K_{3} : H) - 1$.
\end{lemma}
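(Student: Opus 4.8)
The plan is to prove $n \le gr_k(K_3:H) - 1$ by exhibiting a \emph{good} $k$-coloring of $K_n$ — one that is Gallai (no rainbow $K_3$), contains no monochromatic $H$, and uses at most $k$ colors — since the definition of $gr_k(K_3:H)$ then forces $n < gr_k(K_3:H)$. The coloring $G$ is already Gallai and $H$-free by hypothesis, so the only thing that can go wrong is that $G$ uses more than $k$ colors; I will obtain the desired good coloring by recoloring $G$.

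First I would locate the excess colors. Edges inside $A$ lie in $[k]$, and each edge between $A$ and $B_i$ has color $i \in [k-1]$. Assuming $A \neq \emptyset$ (the degenerate case $A = \emptyset$ forces $n \le (k-1)(m-1)$, comfortably below $gr_k(K_3:H)-1$), fix $a \in A$ and consider, for $b \in B_i$ and $b' \in B_j$ with $i \neq j$, the triangle $a b b'$: its edges $ab, ab'$ have colors $i, j$, so the Gallai property forces the cross edge $bb'$ to have color $i$ or $j$. Hence all edges between distinct blocks already use colors in $[k]$, and the only edges that can carry a color outside $[k]$ are those \emph{inside} a single block $B_i$. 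The recoloring is then to \textbf{recolor every within-block edge of color greater than $k$ to color $k$}, leaving all other edges untouched.

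I would verify that the new coloring $G'$ stays Gallai by inspecting triangles, the decisive observation being that a color exceeding $k$ is distinct from every connection color and every cross color (all of which lie in $[k-1]$). Thus whenever recoloring an internal edge $bb'$ to $k$ threatens to create a rainbow triangle $b b' x$ (with $x$ in the same block or an adjacent one), the \emph{original} triangle in $G$ already carried three distinct colors — contradicting that $G$ is Gallai — unless the two remaining edges were forced to share a color, in which case the recolored triangle is not rainbow either. Hence $G'$ is Gallai and uses only colors in $[k]$. For the $H$-freeness: colors $1,\dots,k-1$ are left untouched, so they contain no $H$; and for color $k$ the key point is that no edge leaving a block is colored $k$ (edges to $A$ have color $i$, cross edges have colors in $\{i,j\}$), so in the color-$k$ graph each block $B_i$ is a component isolated from $A$ and from the other blocks, while every color-$k$ edge we added lies inside some block. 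Since $|B_i| \le m-1 < |H|$ no block can host a copy of $H$, the color-$k$ subgraph on $A$ is unchanged from $G$ and hence $H$-free, and $H$ (being connected, as $C_{2\ell+1}$ is) cannot span two components. Therefore $G'$ is a good $k$-coloring of $K_n$ and $n \le gr_k(K_3:H)-1$.

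The main obstacle I anticipate is precisely the Gallai verification for the recolored edges, \emph{i.e.}, confirming that pushing the large-indexed internal colors down to a single color never manufactures a rainbow triangle across two blocks; this is where one must exploit that the discarded colors all exceed $k$ while the structural (connection and cross) colors lie in $[k-1]$. It is also where the choice of dump color matters: the tempting alternative of recoloring internal edges to the block color $i$ fails, since it turns $A \cup B_i$ into a complete split graph that does contain a monochromatic $C_{2\ell+1}$, whereas routing the excess into color $k$ keeps every block isolated in that color and hence $H$-free.
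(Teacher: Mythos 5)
Your proposal takes essentially the same route as the paper's own proof: use a vertex of $A$ to force every edge between $B_i$ and $B_j$ into $\{i,j\}\subseteq[k-1]$, recolor the within-block edges whose color lies outside $[k-1]$ to color $k$, and observe that the resulting $k$-coloring is still Gallai and still $H$-free (since color $k$ leaves each $B_i$ isolated and $|B_i|\le m-1$), so it witnesses $n\le gr_k(K_3:H)-1$; the extra verifications you supply (the triangle case analysis and the connectivity of $H$) merely spell out what the paper asserts in one line. One caveat: your parenthetical claim that $A=\emptyset$ gives $n\le(k-1)(m-1)$, \emph{comfortably below} $gr_k(K_3:H)-1$, is not justified for general $H$ (for $H=P_3$ one has $gr_k(K_3:P_3)=3$ while $(k-1)(m-1)=2(k-1)\ge 4$), though the paper's proof likewise implicitly assumes $A\neq\emptyset$, and $A$ is indeed nonempty in every application in the paper.
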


Note that this lemma uses the assumed structure to provide a bound
on $|G|$ even if $G$ itself uses more than $k$ colors.

\begin{proof}
If $k=1$, then $V(G)=A$, and hence $n \leq {\rm gr}_{1}(K_{3} : H) -
1$. If $k=2$, then $|B_{1}| \leq m - 1 < |H|$, and hence $n \leq
{\rm gr}_{2}(K_{3} : H) - 1$. We assume that $k \geq 3$. For $i \neq
j$, all edges between $B_{i}$ and $B_{j}$ must have either color $i$
or color $j$ to avoid a rainbow triangle. Since $|B_{i}| \leq m - 1
< |H|$, changing all edges within $B_{i}$ that are not colored with
a color in $[k - 1]$ to color $k$ cannot create a monochromatic copy
of $H$. The result of this modification is a $k$-colored copy of
$K_{n}$ with no rainbow triangle and no monochromatic copy of $H$ so
$n \leq {\rm gr}_{k}(K_{3} : H) - 1$.
\end{proof}

We use this classical result of Dirac to obtain an easy lemma.

\begin{theorem}[\cite{MR0047308}]\label{Thm:Dirac}
Let $G$ be a graph of order $n \geq 3$. If the minimum degree of $G$ satisfies $\delta(G) \geq \frac{n}{2}$, then $G$ is hamiltonian.
\end{theorem}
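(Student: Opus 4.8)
The plan is to use the classical longest-path argument, which concentrates all the real work into a single pigeonhole step. First I would record a preliminary observation that the degree hypothesis forces $G$ to be connected: if $G$ split into two or more components, each component would contain some vertex together with all of its neighbors, hence at least $\delta(G) + 1 \geq n/2 + 1$ vertices, and two disjoint sets of that size would require more than $n$ vertices, a contradiction. So $G$ is connected.

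Next I would choose a longest path $P = v_0 v_1 \cdots v_\ell$ in $G$ and exploit its maximality. If $v_0$ had a neighbor outside $P$, that edge would extend $P$, so every neighbor of $v_0$ lies among $v_1, \ldots, v_\ell$; symmetrically every neighbor of $v_\ell$ lies among $v_0, \ldots, v_{\ell-1}$. In particular $\ell \geq \delta(G) \geq n/2$, and both endpoints have all of their (at least $n/2$) neighbors confined to the vertices of $P$.

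The crux of the argument is to upgrade the longest path into a cycle spanning $V(P)$. To this end I would set $A = \{\, i : v_0 v_i \in E \,\}$ and $B = \{\, i : v_\ell v_{i-1} \in E \,\}$, both regarded as subsets of $\{1, \ldots, \ell\}$. By the previous paragraph $|A| = \deg(v_0) \geq n/2$ and $|B| = \deg(v_\ell) \geq n/2$, while the ground set has size $\ell \leq n - 1$. Hence $|A| + |B| \geq n > \ell$, so $A \cap B \neq \emptyset$. For any index $i$ in the intersection, the edges $v_0 v_i$ and $v_{i-1} v_\ell$ let me reroute $P$ into the cycle $C = v_0 v_1 \cdots v_{i-1} v_\ell v_{\ell-1} \cdots v_i v_0$, which visits exactly the vertices of $P$.

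Finally I would close the argument by contradiction on the length of $C$. If $C$ omitted some vertex, then connectivity of $G$ would supply a vertex $w \notin V(C)$ adjacent to some vertex of $C$; cutting $C$ at that vertex and appending $w$ would yield a path on $\ell + 2$ vertices, contradicting the maximality of $P$. Therefore $C$ is a Hamiltonian cycle. I expect the pigeonhole-and-rerouting step to be the only genuinely nonroutine part; the connectivity remark and the maximality deductions are bookkeeping, and the whole proof avoids any appeal to deeper structure. An alternative Ore-type closure argument, passing to a maximal non-Hamiltonian supergraph and applying the same crossing trick to a Hamiltonian path between a nonadjacent pair, would work equally well, but the longest-path version is the more self-contained route.
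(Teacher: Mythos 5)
Your proof is correct, but note that the paper itself offers no proof to compare against: this is Dirac's classical theorem of 1952, which the paper simply quotes with a citation (\cite{MR0047308}) and uses as a black box (e.g.\ in Lemma~\ref{Lemma:2l+1}). What you have written is the standard longest-path (rotation) proof, and every step checks out: the connectivity observation is sound; the sets $A = \{\, i : v_0v_i \in E \,\}$ and $B = \{\, i : v_\ell v_{i-1} \in E \,\}$ are indeed both subsets of $\{1,\dots,\ell\}$ with $|A| = \deg(v_0)$ and $|B| = \deg(v_\ell)$, so $|A| + |B| \geq n > \ell$ forces a common index $i$, and the rerouted cycle $v_0 v_1 \cdots v_{i-1} v_\ell v_{\ell-1} \cdots v_i v_0$ spans $V(P)$ even in the degenerate cases $i = 1$ and $i = \ell$ (and $\ell \geq n/2 \geq 2$ guarantees it is a genuine cycle); the final absorption step correctly uses connectivity and the maximality of $P$ to conclude the cycle is Hamiltonian. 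So the proposal is a complete and correct proof of the cited theorem, supplied where the paper deliberately supplies none.
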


\begin{lemma}\label{Lemma:2l+1}
If there are at least $2\ell + 1$ vertices in a Gallai colored
complete graph $G$ with only one color appearing on edges between
parts of the Gallai partition and all parts of order at most $\ell$,
then $G$ contains a monochromatic copy of $C_{2\ell + 1}$.
\end{lemma}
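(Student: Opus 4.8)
The plan is to reduce the statement to an application of Dirac's theorem (Theorem~\ref{Thm:Dirac}) on a complete multipartite subgraph of a single color. Let the Gallai partition of $G$ have parts $V_1, V_2, \dots, V_t$, each with $|V_i| \leq \ell$, and let the one color appearing on all edges between parts be (say) color $1$. The key observation is that the subgraph of $G$ consisting of all color-$1$ edges contains, as a spanning subgraph, the complete multipartite graph with parts $V_1, \dots, V_t$: every edge joining two distinct parts is present and has color $1$, regardless of how the edges inside the parts are colored. Thus it suffices to find a $C_{2\ell+1}$ using only between-part edges.

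Next I would select an arbitrary set $S$ of exactly $2\ell + 1$ vertices from $G$, which is possible since $n \geq 2\ell + 1$. Consider the complete multipartite graph $F$ on vertex set $S$ whose parts are the nonempty sets $V_i \cap S$. Because each original part satisfies $|V_i| \leq \ell$, every part of $F$ also has order at most $\ell$. In a complete multipartite graph the minimum degree equals the number of vertices minus the size of the largest part, so
$$
\delta(F) \;\geq\; (2\ell + 1) - \ell \;=\; \ell + 1 \;\geq\; \frac{2\ell + 1}{2} \;=\; \frac{|F|}{2}.
$$
Since $|F| = 2\ell + 1 \geq 3$, Theorem~\ref{Thm:Dirac} applies and $F$ is hamiltonian.

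Finally, a Hamiltonian cycle of $F$ visits all $2\ell + 1$ vertices of $S$ and uses only edges of $F$, each of which joins two distinct parts and hence carries color $1$. This cycle is therefore a monochromatic copy of $C_{2\ell + 1}$ in $G$, completing the argument. The only point requiring care is the degree estimate together with the observation that the Hamiltonian cycle automatically lies in the monochromatic between-part graph; both follow directly from the hypothesis that every part has order at most $\ell$, so I expect no substantial obstacle beyond correctly setting up the complete multipartite structure.
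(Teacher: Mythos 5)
Your proof is correct and takes essentially the same approach as the paper: both bound the minimum degree of the single-colored between-part graph by $(2\ell + 1) - \ell = \ell + 1$ and invoke Dirac's theorem (Theorem~\ref{Thm:Dirac}). Your version is in fact slightly more careful, since passing first to a subset of exactly $2\ell + 1$ vertices guarantees that the resulting Hamiltonian cycle has length exactly $2\ell + 1$, a point the paper's proof leaves implicit when $|G| > 2\ell + 1$.
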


\begin{proof}
Let blue be the color of the edges between parts. Since each part of
the Gallai partition has order at most $\ell$, every vertex is
incident with at least $|G|-\ell\geq |G|/2$ blue edges because
$|G|\geq 2\ell+1$. By Theorem~\ref{Thm:Dirac}, $G$ contains a blue
copy of $C_{2\ell + 1}$.
\end{proof}

Next is a helpful result on the existence of paths.

\begin{theorem}[\cite{MR0114772}]\label{Thm:E-G}
Let $G$ be a graph on $n$ vertices and let $k \geq 2$ be an integer.
If the number of edges in $G$ satisfies $e(G) > \frac{k - 1}{2} n$,
then $G$ contains a path with $k$ edges (i.e.~a copy of $P_{k +
1}$).
\end{theorem}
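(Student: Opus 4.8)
The plan is to prove the contrapositive statement: if $G$ contains no copy of $P_{k+1}$ (equivalently, no path with $k$ edges), then $e(G) \le \frac{k-1}{2}\,n$. I would proceed by induction on the number of vertices $n$. The base case covers all $n \le k$, where $e(G) \le \binom{n}{2} = \frac{n(n-1)}{2} \le \frac{(k-1)n}{2}$ holds trivially because $n - 1 \le k - 1$. If $G$ is disconnected, I would apply the induction hypothesis to each component (each has fewer than $n$ vertices and still avoids $P_{k+1}$) and add the bounds. Thus the only substantive case is a connected $G$ on $n > k$ vertices.

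For that case I would fix a longest path $P = v_0 v_1 \cdots v_p$ in $G$. By hypothesis $p \le k-1$, so $p + 1 \le k < n$ and $P$ fails to span $G$; by maximality of $P$, all neighbors of both endpoints lie on $P$. The crucial step is a rotation argument: if some index $i$ satisfied both $v_0 v_{i+1} \in E(G)$ and $v_i v_p \in E(G)$, then $v_0 v_1 \cdots v_i v_p v_{p-1} \cdots v_{i+1} v_0$ would be a cycle through every vertex of $P$, and I will argue (below) that such a cycle forces a path longer than $P$. Recording the neighborhoods as $A = \{\, i : v_{i+1} \in N(v_0)\,\}$ and $B = \{\, i : v_i \in N(v_p)\,\}$, both subsets of $\{0, 1, \dots, p-1\}$, the nonexistence of such an $i$ says exactly that $A \cap B = \emptyset$. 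Hence $\deg(v_0) + \deg(v_p) = |A| + |B| \le p \le k - 1$, so some endpoint, say $v_0$, has degree at most $\frac{k-1}{2}$. Deleting $v_0$ and invoking the induction hypothesis on $G - v_0$ then yields $e(G) = e(G - v_0) + \deg(v_0) \le \frac{(k-1)(n-1)}{2} + \frac{k-1}{2} = \frac{(k-1)n}{2}$, completing the step.

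The step I expect to require the most care is the deferred claim that the rotation cycle produces a path longer than $P$. This is precisely where connectivity and the strict inequality $n > p+1$ are used: since $P$ does not span the connected graph $G$, some vertex $u \notin V(P)$ is adjacent to a vertex $v_i \in V(P)$; deleting from the spanning cycle of $V(P)$ an edge at $v_i$ leaves a Hamilton path of $V(P)$ starting at $v_i$, and prepending the edge $u v_i$ gives a path with $p + 1$ edges, contradicting the maximality of $P$. The boundary index $i = 0$ (where the purported chord $v_0 v_1$ is merely a path edge and the condition reduces to $v_0 v_p \in E(G)$) is handled uniformly by the set bookkeeping, so no separate case analysis is needed. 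Everything else is routine degree counting.
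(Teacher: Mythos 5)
The paper does not prove this statement at all: it is quoted directly as a classical theorem of Erd\H{o}s and Gallai \cite{MR0114772} and used as a black box (its only role is to feed into Lemma~\ref{Lemma:ColE-G}), so there is no internal proof to compare yours against. Your argument is correct and self-contained, and it is in fact the standard proof of this bound: induction on $n$, reduction to the connected case, and the rotation trick showing that the endpoint-neighborhood index sets $A$ and $B$ are disjoint, whence $\deg(v_0)+\deg(v_p)=|A|+|B|\le p\le k-1$, some endpoint has degree at most $\frac{k-1}{2}$, and deleting it closes the induction. The one point you should state explicitly is that the rotation step tacitly requires the longest path to have $p\ge 2$ edges: if $p=1$ then $A\cap B=\{0\}\neq\emptyset$, yet the purported cycle $v_0v_pv_0$ is degenerate (it repeats the edge $v_0v_1$), so the implication ``$A\cap B\neq\emptyset$ yields a longer path'' fails there. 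This is harmless, because in your connected case $n>k\ge 2$ gives $n\ge 3$, and a connected graph on at least three vertices always contains a path with two edges (some vertex has degree at least $2$), so $p\ge 2$ is automatic --- but since the disjointness claim is the crux of the proof, the exclusion of $p=1$ deserves a sentence rather than silence. With that sentence added, the proof is complete.
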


We use Theorem~\ref{Thm:E-G} to prove the following colored version.
Let $d_{R}(v)$ and $d_{B}(v)$ denote the red and blue degrees of the
vertex $v$ respectively, that is, the number of edges incident to
$v$ that have color red and blue respectively.

\begin{lemma}\label{Lemma:ColE-G}
Let $a,b$ be two integers with $a\geq 3$ and $b\geq 3$. Let $G$ be a
graph of order $n$ with edges colored by red and blue. If for every
vertex $v \in V(G)$ and for some non-negative integers $a$ and $b$
with $a + b \geq 3$, we have $d(v) \geq a + b - 3$, then $G$
contains either a red copy of $P_{a}$ or a blue copy of $P_{b}$.
\end{lemma}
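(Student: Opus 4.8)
The plan is to argue by contradiction, applying Theorem~\ref{Thm:E-G} separately to the red and the blue subgraphs and then combining the resulting edge bounds with the degree hypothesis. Suppose $G$ contains neither a red copy of $P_{a}$ nor a blue copy of $P_{b}$. Since a path on $a$ vertices has exactly $a - 1$ edges, a red $P_{a}$ is a red path with $a - 1$ edges; setting $k = a - 1$ in Theorem~\ref{Thm:E-G} shows that the absence of a red $P_{a}$ forces the red subgraph $G_{R}$ to satisfy $e(G_{R}) \leq \frac{a - 2}{2} n$. Symmetrically, the absence of a blue $P_{b}$ gives $e(G_{B}) \leq \frac{b - 2}{2} n$ for the blue subgraph $G_{B}$.

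Adding these two bounds, I would obtain
$$
e(G) = e(G_{R}) + e(G_{B}) \leq \frac{a - 2}{2} n + \frac{b - 2}{2} n = \frac{a + b - 4}{2} n .
$$
On the other hand, the hypothesis $d_{R}(v) + d_{B}(v) \geq a + b - 3$ for every vertex $v$, summed over all $n$ vertices and divided by $2$, yields
$$
e(G) = \frac{1}{2} \sum_{v \in V(G)} \left( d_{R}(v) + d_{B}(v) \right) \geq \frac{a + b - 3}{2} n .
$$
Combining the two displays gives $\frac{a + b - 3}{2} n \leq \frac{a + b - 4}{2} n$, which is impossible for $n \geq 1$. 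This contradiction establishes the lemma.

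The one place that requires care, and which I expect to be the main (if minor) obstacle, is that Theorem~\ref{Thm:E-G} is stated only for $k \geq 2$, so it directly bounds $e(G_{R})$ only when $a \geq 3$ and $e(G_{B})$ only when $b \geq 3$. The small values must therefore be disposed of by hand. When $a = 2$, the absence of a red $P_{2}$ means $G_{R}$ has no edges, so $e(G_{R}) = 0 = \frac{a - 2}{2} n$ and the required bound still holds; the case $b = 2$ is identical. When $a \leq 1$ (and symmetrically $b \leq 1$), a red $P_{a}$ is the empty graph or a single vertex and is present trivially whenever $n \geq 1$, so the conclusion holds outright. Hence, after handling these boundary values, one may assume $a, b \geq 2$, and the edge-counting argument above applies verbatim.
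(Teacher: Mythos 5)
Your proof is correct and takes essentially the same route as the paper's: both apply Theorem~\ref{Thm:E-G} to the red and blue subgraphs separately and combine the resulting edge counts with the degree hypothesis (the paper argues directly via average degrees, you by contradiction, but the computation is identical). Your explicit treatment of the boundary cases $a \leq 2$ and $b \leq 2$, where Theorem~\ref{Thm:E-G} does not formally apply, is a detail the paper glosses over, so it is a welcome addition rather than a deviation.
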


\begin{proof}
Let $\bar{d}(G)$, $\bar{d}_{R}(G)$, and $\bar{d}_{B}(G)$ denote the
average degree, average red degree, and average blue degree of $G$
respectively. Then $\bar{d}(G) = \bar{d}_{R}(G) + \bar{d}_{B}(G)$.
Since $d_{R}(v) + d_{B}(v) \geq a + b - 3$ for every vertex $v \in
V(G)$, we have $\bar{d}_{R}(G) + \bar{d}_{B}(G) \geq a + b - 3$ so
either $\bar{d}_{R}(G) > a - 2$ or $\bar{d}_{B}(G) > b - 2$. If
$\bar{d}_{R}(G) > a - 2$, then there are more than $\frac{n(a -
2)}{2}$ red edges in $G$, so by Theorem~\ref{Thm:E-G}, $G$ contains
a red copy of $P_{a}$. On the other hand, if $\bar{d}_{B}(G) > b -
2$, then there are more than $\frac{n(b - 2)}{2}$ blue edges in $G$,
so by Theorem~\ref{Thm:E-G}, $G$ contains a blue copy of $P_{b}$,
completing the proof.
\end{proof}

\begin{lemma}\label{Lemma:5l-2}
Let $b,\ell$ be two integers. Suppose there are at least $4\ell+b-2$
vertices in a Gallai colored complete graph $G$ with only two
colors, say red and blue, appearing on edges between parts of the
Gallai partition, and all parts of order at most $b$. If there
exists a part $H_1$ such that $|H_1|\geq 2$ and all the edges from
$H_1$ to the other parts are red or blue, then $G$ contains a
monochromatic copy of $C_{2\ell + 1}$.
\end{lemma}

\begin{proof}
Let $H_{1}, H_{2}, \ldots, H_{t}$ be parts of this Gallai partition.
Note that the edges between parts of the Gallai partition are red or
blue. Without loss of generality, we assume that all the edges from
$H_1$ to $H_{2}\cup \cdots \cup H_{t}$ are red. Let $|H_1|=x$. Then
$x\geq 2$. Let $P=u_1u_2\cdots u_{y}$ be a longest red path of
$H_{2}\cup \cdots \cup H_{t}$. If $y\geq 2\ell$, then we choose a
vertex from $H_1$ and this vertex together with $P$ form a cycle
$C_{2\ell + 1}$, as desired. Assume $y\leq 2\ell-1$. Clearly,
$\sum_{i=2}^{t}|H_i|-y\geq 4\ell+b-2-x-y=2\ell+1-x-y+2\ell-3+b$.
Since $|H_i|\leq b$ for each $i \ (1\leq i\leq t)$, it follows that
for each vertex $v\in \bigcup_{i=2}^{t}H_i-V(P)$, $d_R(v)+d_B(v)\geq
2\ell+1-x-y+2\ell-3$. From Lemma \ref{Lemma:ColE-G},
$\bigcup_{i=2}^{t}H_i-V(P)$ contains a red path of length
$2\ell-x-y$ or a blue path of length $2\ell-1$.

Suppose that there is a red path of length $2\ell-x-y$ in
$\bigcup_{i=2}^{t}H_i-V(P)$, say $P_{2\ell+1-x-y}=v_{1}v_{2}\ldots
v_{2\ell+1-x-y}$. Let $V(H_1)=\{w_1,w_2,\ldots w_{x}\}$. Then
$w_1u_1u_2\ldots u_yw_2v_1v_2\ldots
v_{2\ell-2x-y+3}w_3v_{2\ell-2x-y+4}w_4v_{2\ell-2x-y+5}w_5\ldots
v_{2\ell-x-y}w_x$\\ $v_{2\ell-x-y+1}w_1$ is a red $C_{2\ell + 1}$,
as desired.

Suppose that there is a blue path of length $2\ell-1$ in
$\bigcup_{i=2}^{t}H_i-V(P)$, say $P_{2\ell}=v_{1}v_{2}\ldots
v_{2\ell}$. Without loss of generality, let $u_1\in V(H_j) \ (1\leq
j\leq t)$. If $v_{1}\not\in V(H_j)$ and $v_{2\ell}\not\in V(H_j)$,
then $u_1v_1$ and $u_1v_{2\ell}$ are blue, and hence
$u_1v_1v_2\ldots v_{2\ell}u_1$ is a blue $C_{2\ell + 1}$, as
desired. If $v_{1}\in V(H_j)$ and $v_{2\ell}\in V(H_j)$, then we
choose a vertex $w\in \bigcup_{i=2}^{t}H_i-V(P)-V(P_{2\ell})$, and
the edge $wv_1$ and $wv_{2\ell}$ are blue, and hence $wv_1v_2\ldots
v_{2\ell}w$ is a blue $C_{2\ell + 1}$, as desired. Suppose that
$v_{1}\in V(H_j)$ and $v_{2\ell}\not\in V(H_j)$. Then $v_1v_{2\ell}$
is blue, and hence $v_1v_2\ldots v_{2\ell}v_1$ is a blue
$C_{2\ell}$. Then there exists two vertices $v_{a},v_{a+1}$ in the
cycle $C_{2\ell}$ such that $v_{a}\not\in V(H_j)$ and
$v_{a+1}\not\in V(H_j)$. Since the edges $u_1v_a$ and $u_1v_{a+1}$
are blue, it follows that $C_{2\ell}-v_{a}v_{a+1}+u_1v_a+v_{a+1}$ is
a blue $C_{2\ell + 1}$, as desired. The same is true for the case
that $v_{1}\not\in V(H_j)$ and $v_{2\ell}\in V(H_j)$.
\end{proof}

\begin{lemma}\label{Lemma:7l-3}
Suppose there are at least $7\ell-3$ vertices in a Gallai colored
complete graph $G$ with only two colors, say red and blue, appearing
on edges between parts of the Gallai partition, and all parts of
order at most $\ell$. If there exists a part $H_1$ such that
$|H_1|=1$ and all the edges from $H_1$ to the other parts are red or
blue, then $G$ contains a monochromatic copy of $C_{2\ell + 1}$.
\end{lemma}
\begin{proof}
Let $H_{1}, H_{2}, \ldots, H_{t}$ be parts of this Gallai partition.
Note that the edges between parts of the Gallai partition are red or
blue. Without loss of generality, we assume that all the edges from
$H_1$ to $H_{2}\cup \cdots \cup H_{t}$ are red. Let $V(H_1)=\{w_1\}$. Let $P=u_1u_2\cdots u_{y}$ be a longest red path of
$H_{2}\cup \cdots \cup H_{t}$. If $y\geq 2\ell$, then we choose a
vertex from $H_1$ and this vertex together with $P$ form a cycle
$C_{2\ell + 1}$, as desired. Assume $y\leq 2\ell-1$. Clearly,
$\sum_{i=2}^{t}|H_i|-y\geq 7\ell-3-1-y\geq 7\ell-3-1-2\ell+1= 2\ell+2\ell-3+\ell$.
Since $|H_i|\leq \ell$ for each $i \ (1\leq i\leq t)$, it follows
that for each vertex $v\in \bigcup_{i=2}^{t}H_i-V(P)$,
$d_R(v)+d_B(v)\geq 2\ell+2\ell-3$. From Lemma
\ref{Lemma:ColE-G}, $\bigcup_{i=2}^{t}H_i-V(P)$ contains a red path
of length $2\ell-1$ or a blue path of length $2\ell-1$.

Suppose that there is a red path of length $2\ell-1$ in
$\bigcup_{i=2}^{t}H_i-V(P)$, say $P_{2\ell}=v_{1}v_{2}\ldots
v_{2\ell}$. Then
$w_1v_{1}v_{2}\ldots
v_{2\ell}w_1$ is a red $C_{2\ell + 1}$,
as desired.

Suppose that there is a blue path of length $2\ell-1$ in
$\bigcup_{i=2}^{t}H_i-V(P)$, say $P_{2\ell}=v_{1}v_{2}\ldots
v_{2\ell}$. Without loss of generality, let $u_1\in V(H_j) \ (1\leq
j\leq t)$. If $v_{1}\not\in V(H_j)$ and $v_{2\ell}\not\in V(H_j)$,
then $u_1v_1$ and $u_1v_{2\ell}$ are blue, and hence
$u_1v_1v_2\ldots v_{2\ell}u_1$ is a blue $C_{2\ell + 1}$, as
desired. If $v_{1}\in V(H_j)$ and $v_{2\ell}\in V(H_j)$, then we
choose a vertex $w\in \bigcup_{i=2}^{t}H_i-V(P)-V(P_{2\ell})$, and
the edge $wv_1$ and $wv_{2\ell}$ are blue, and hence $wv_1v_2\ldots
v_{2\ell}w$ is a blue $C_{2\ell + 1}$, as desired. Suppose that
$v_{1}\in V(H_j)$ and $v_{2\ell}\not\in V(H_j)$. Then $v_1v_{2\ell}$
is blue, and hence $v_1v_2\ldots v_{2\ell}v_1$ is a blue
$C_{2\ell}$. Then there exists two vertices $v_{a},v_{a+1}$ in the
cycle $C_{2\ell}$ such that $v_{a}\not\in V(H_j)$ and
$v_{a+1}\not\in V(H_j)$. Since the edges $u_1v_a$ and $u_1v_{a+1}$
are blue, it follows that $C_{2\ell}-v_{a}v_{a+1}+u_1v_a+v_{a+1}$ is
a blue $C_{2\ell + 1}$, as desired. The same is true for the case
that $v_{1}\not\in V(H_j)$ and $v_{2\ell}\in V(H_j)$.
\end{proof}

Finally, we state some other known results that will be used in the proof.

\begin{theorem}[\cite{MR0345866, MR1846919, MR0332567}]\label{Thm:2-color}
$$
R(C_{m}, C_{n}) = \begin{cases}
2n - 1 \\
~ ~ ~ ~ \text{ for $3 \leq m \leq n$, $m$ and $n$ odd, $(m, n) \neq (3, 3)$,}\\
n - 1 + m/2 \\
~ ~ ~ ~ \text{ for $4 \leq m \leq n$, $m$ and $n$ even, $(m, n) \neq (4, 4)$,}\\
\max\{ n - 1 + m/2, 2m - 1\}\\
~ ~ ~ ~ \text{ for $4 \leq m < n$, $m$ even and $n$ odd.}
\end{cases}
$$
\end{theorem}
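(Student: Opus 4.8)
The statement combines, for each of the three regimes, a lower bound realized by an explicit two-coloring that avoids both target cycles, and a matching upper bound. My plan is to dispatch the lower bounds by construction and then attack the upper bounds by induction together with extremal results on cycle lengths.

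For the lower bounds I would exhibit three colorings, each on the complete graph with one fewer vertex than the claimed value. For $m$ odd with $m \le n$, partition $2n-2$ vertices into two blocks of size $n-1$, coloring edges inside each block blue and edges across the parts red; the blue graph is two disjoint copies of $K_{n-1}$ (longest blue cycle $n-1$, so no blue $C_n$) while the red graph is $K_{n-1,n-1}$, bipartite and hence free of the odd cycle $C_m$, giving $R(C_m,C_n) \ge 2n-1$. For $m,n$ both even, take a blue clique $Y$ of size $n-1$ together with a set $X$ of size $m/2-1$, coloring every edge meeting $X$ red and every edge inside $Y$ blue; again the blue clique has no $C_n$, while $Y$ is independent in red, so every red cycle passes alternately through $X$ and has length at most $2|X| = m-2 < m$, yielding $R \ge n-1+m/2$. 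For $m$ even and $n$ odd this same construction still gives $R \ge n-1+m/2$, and additionally two red cliques of size $m-1$ with all cross edges blue produces a bipartite blue graph with no odd cycle $C_n$, giving $R \ge 2m-1$; together these yield $R \ge \max\{n-1+m/2,\,2m-1\}$.

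For the upper bounds I would induct on $m+n$, assuming in a coloring of $K_N$ (with $N$ the claimed value) that there is neither a red $C_m$ nor a blue $C_n$ and deriving a contradiction. Since $K_N$ has $\binom{N}{2}$ edges, one color class, say red, carries at least half of them and is therefore dense, so a circumference analogue of Theorem~\ref{Thm:E-G} (a graph on $N$ vertices with more than $\frac12 c(N-1)$ edges has a cycle of length exceeding $c$) produces a long red cycle. The heart of the argument is to upgrade "a long cycle exists" to "a cycle of the exact required length exists," for which I would invoke pancyclicity-type results of Bondy and Brandt: a dense non-bipartite graph contains cycles of every length throughout a long interval, so a red $C_m$ appears unless the red graph is nearly bipartite, in which case its bipartition concentrates edges in blue and forces a blue $C_n$. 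The three cases of the formula correspond precisely to which extremal configuration is the obstruction, namely the two-clique coloring for odd $m$, the clique-plus-small-dominating-set coloring for even $m$ and even $n$, and the larger of the two for even $m$ and odd $n$.

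The main obstacle is the sharp case analysis needed to pin down the constants, since the bound is tight: I must show that whenever the red graph only barely fails to contain $C_m$ it is forced into one of the extremal shapes appearing in the lower bound, and then verify that this shape leaves exactly enough room in blue to assemble $C_n$. This bookkeeping is most delicate at the boundary, where the parity interaction between $m$ and $n$, the distinction between $m \le n$ and $m < n$, and the two genuine exceptions $(m,n) = (3,3)$ and $(4,4)$ all come into play; in the exceptional cases the competing clique configurations coincide and the formula undercounts, so these must be excluded from the general argument and checked directly by hand.
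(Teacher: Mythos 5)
This statement is one the paper does not prove at all: it is quoted from the literature (Faudree--Schelp, Rosta, and K\'arolyi--Rosta) and used as a black box, so your attempt can only be measured against the known proofs, which are long and delicate. Your lower bounds are correct and are exactly the standard extremal colorings: two monochromatic copies of $K_{n-1}$ with a bipartite complement for odd $m$; the clique $K_{n-1}$ together with a set $X$ of size $m/2-1$ dominating in red for even $m$ (your argument that every red cycle has length at most $2|X| = m-2$ is right, since the $n-1$ clique vertices are red-independent and so at most $|X|$ of them fit on any red cycle); and additionally two red copies of $K_{m-1}$ when $n$ is odd. These give all three lower bounds cleanly.

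The genuine gap is the upper bound, which is the entire substance of the theorem and remains a program in your write-up. Concretely, your opening move --- the majority color has at least $\binom{N}{2}/2$ edges, hence a long cycle by an Erd\H{o}s--Gallai circumference bound, hence the target cycle by weak pancyclicity --- only comes close to closing in the diagonal odd case: with $N = 2n-1$ the majority color has roughly $N(N-1)/4$ edges, which sits \emph{exactly} at the Bondy/Brandt bipartite threshold and guarantees circumference only just above $N/2$, so even there the boundary cases require care. In the even-$m$ regimes, where $N = n - 1 + m/2$, the argument fails outright: if blue is the majority color, circumference roughly $N/2 < n$ says nothing about a blue $C_n$; and a red $C_m$ with $m$ even cannot be forced by density alone, since your own extremal coloring shows red can carry about $(m/2-1)N$ edges (and the Tur\'an number for a single even cycle is larger still) without containing $C_m$. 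The asymmetric cases genuinely need different machinery --- extremal bounds for even cycles on one side, longest-path/rotation arguments with minimum-degree control on the other, and a stability step showing that a red graph barely missing $C_m$ must take one of the two extremal shapes --- and that stability step, which you assert in one sentence (``the red graph is forced into one of the extremal shapes''), is precisely where the theorem lives; it occupies most of the dozens of pages in Faudree--Schelp and Rosta. So as written you have proved the lower bounds only; the Bondy/Brandt direction you point to is workable (and is how modern treatments handle the odd diagonal case), but the proposal does not yet contain an upper-bound proof for any of the three cases.
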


\begin{theorem}[\cite{FM11, MR3121112}]\label{Thm:EvenUp}
Given integers $n \geq 2$ and $k \geq 1$,
$$
(n - 1)k + n + 1 \leq {\rm gr}_{k} (K_{3} : C_{2n}) \leq (n - 1)k +
3n.
$$
\end{theorem}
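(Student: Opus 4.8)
The two inequalities call for different ideas, so I would treat them separately.

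For the lower bound I would exhibit, for every $k$, a Gallai $k$-coloring of $K_{(n-1)k+n}$ with no monochromatic $C_{2n}$, which forces $gr_k(K_3:C_{2n}) \ge (n-1)k+n+1$. I build this coloring recursively. Start with $G_1 = K_{2n-1}$ in color $1$, which has no $C_{2n}$ simply because it has only $2n-1$ vertices. Given $G_{k-1}$ on $(n-1)(k-1)+n$ vertices, form $G_k$ by adjoining a set $B$ of exactly $n-1$ new vertices and coloring \emph{every} edge incident with $B$ (both inside $B$ and from $B$ to $G_{k-1}$) with the new color $k$. The coloring stays Gallai: any triangle meeting $B$ carries at least two color-$k$ edges, and triangles inside $G_{k-1}$ are rainbow-free by induction. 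The old colors $1,\dots,k-1$ live entirely inside $G_{k-1}$, so they contain no $C_{2n}$ by induction, while color $k$ induces the join of a clique $K_{n-1}$ (on $B$) with an independent set (on $V(G_{k-1})$); in such a join no two independent vertices are consecutive on a cycle, so its circumference is at most $2(n-1) < 2n$. Since $|G_k| = |G_{k-1}| + (n-1)$, we get $|G_k| = (n-1)k+n$, as required.

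For the upper bound I would induct on $k$, proving that every Gallai $k$-coloring of $K_N$ with $N=(n-1)k+3n$ contains a monochromatic $C_{2n}$. The base cases $k=1$ (one color on $\ge 2n$ vertices) and $k=2$ (every $2$-coloring is Gallai and $N \ge R(C_{2n},C_{2n}) = 3n-1$ by Theorem~\ref{Thm:2-color}) are immediate. For the inductive step, apply the Gallai partition theorem to obtain parts $V_1,\dots,V_t$ with $t\ge 2$ whose reduced graph is $2$-colored, say in red and blue, and let $V_1$ be a largest part. If $t \ge 3n-1 = R(C_{2n},C_{2n})$, then the $2$-colored reduced graph already contains a monochromatic $C_{2n}$ on $2n$ of the parts, and choosing one vertex from each of these parts lifts it to a monochromatic $C_{2n}$ in $K_N$; so we may assume $t \le 3n-2$. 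For any part $P$ with $|P|\ge n$, group the other parts by the unique color joining them to $P$, with unions $R_P$ (red) and $B_P$ (blue); if $|R_P|\ge n$ or $|B_P|\ge n$ then $P$ and that side span a monochromatic $K_{n,n}\supseteq C_{2n}$. Avoiding this forces $|R_P|,|B_P|\le n-1$, hence $|P|\ge N-2(n-1)$, and since $N-2(n-1) > N/2$, at most one part can be this large. The argument thus splits into the regime where every part has size $\le n-1$ and the regime where a single part $V_1$ dominates.

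In the all-small regime, $N\le t(n-1)\le (3n-2)(n-1)$ confines $k$ to a bounded range, and there I would use that one of red, blue is connected and spanning on the parts and combine a long monochromatic cycle in the reduced graph (a Dirac-type argument, Theorem~\ref{Thm:Dirac}) with the small part sizes to realize a blow-up cycle of length exactly $2n$. In the dominant regime, $|V_1|\ge (n-1)k+n+2$ while the outside parts total at most $2(n-1)$; here I would seek the cycle inside $V_1$, using that $V_1$ is completely joined in red to the nonempty set $R_{V_1}$ (and in blue to $B_{V_1}$), so any red (resp.\ blue) path on $2n-1$ vertices inside $V_1$ closes through one outside vertex into a monochromatic $C_{2n}$. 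Lemma~\ref{Lemma:ColE-G} produces such a path whenever the red-plus-blue degrees inside $V_1$ are large; if instead the other $k-2$ colors dominate $V_1$, then $V_1$ uses fewer of the partition colors and I would apply the induction hypothesis to $V_1$, absorbing the small outside parts via the recoloring device behind Lemma~\ref{Lemma:SmallOutside}. The main obstacle is exactly this last bookkeeping: the crude $K_{n,n}$ estimate yields only $|V_1|\ge (n-1)k+n+2$, which falls short of the threshold $(n-1)(k-1)+3n$ needed to recurse into $V_1$ by a margin of about $n-1$, so closing the gap requires the sharper path-closing via Lemma~\ref{Lemma:ColE-G} together with a precise count of how many partition colors survive inside $V_1$. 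Extracting a cycle of the \emph{exact} length $2n$ (rather than merely a long one) in the intermediate all-small regime is the other delicate point.
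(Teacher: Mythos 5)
First, a point of comparison: the paper does not prove Theorem~\ref{Thm:EvenUp} at all --- it is imported verbatim from \cite{FM11} and \cite{MR3121112}, so there is no in-paper proof to match yours against. Your lower-bound construction is correct and complete, and it is exactly the standard one from \cite{FM11}: iterated addition of $n-1$ vertices in a fresh color, with the color-$k$ graph being a join of $K_{n-1}$ with an independent set and hence of circumference at most $2(n-1)$; the Gallai property and the count $(n-1)k+n$ both check out.

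The upper bound, however, is a sketch with genuine gaps at precisely the points you flag, and at least one step fails as stated. In the all-small regime, the ``Dirac-type argument'' on the reduced graph does not go through: in a $2$-colored reduced graph on $t$ parts each vertex has majority degree at least $(t-1)/2$ in \emph{some} color, but not all in the \emph{same} color, so Theorem~\ref{Thm:Dirac} gives you nothing directly; and even granted a long monochromatic cycle through parts, a blow-up of an \emph{odd} reduced cycle yields cycles of odd lengths $m, m+2, \ldots$ by zigzagging, so extracting a cycle of length exactly $2n$ has a parity obstruction that needs a separate idea. In the dominant regime the situation is worse than a bookkeeping shortfall: to recurse into $V_1$ with $k-1$ colors you must actually eliminate a color inside $V_1$, and the available information (no red $P_{2n-1}$ inside $V_1$ when $R_{V_1} \neq \emptyset$) does not permit this --- a disjoint union of red copies of $K_{2n-2}$ has no red $P_{2n-1}$ yet cannot be destroyed by deleting few vertices, so the recoloring device behind Lemma~\ref{Lemma:SmallOutside} (which needs the red pieces confined to sets of size less than $|C_{2n}|$) does not apply. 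Combined with the admitted deficit of $n-1$ between $|V_1| \geq (n-1)k+n+2$ and the recursion threshold $(n-1)(k-1)+3n$, the inductive step is open at both ends; these are exactly the places where \cite{FM11} and \cite{MR3121112} do their real work (the additive constant $3n$ in \cite{MR3121112} required a substantially more careful structural analysis than a $K_{n,n}$ count plus recursion). As it stands, your proposal establishes the lower bound but not the upper bound.
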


In particular, we mostly use the immediate corollary of
Theorem~\ref{Thm:EvenUp} that ${\rm gr}_{3}(K_{3} : C_{2\ell - 2})
\leq 6\ell - 9$.

\section{Proof of Theorem~\ref{Thm:OddCycles}} \label{Sec:OddCyclesPf}

\begin{proof}
This proof is by induction on $k$. Throughout the proof, we will eliminate edges of some colors from subgraphs and then apply induction on the number of colors available for use on the edges within those subgraphs to provide bounds on the total number of vertices. Since the cases $k = 1$ and $k = 2$ are either trivial or follow from Theorem~\ref{Thm:2-color} respectively, we assume $k \geq 3$. For a contradiction, suppose $\ell \geq 3$ and let $G$ be a Gallai colored complete graph $K_{n}$ containing no monochromatic copy of $C_{2\ell + 1}$ with
$$
n = \ell \cdot 2^{k} + 1.
$$
The goal is to arrive at a contradiction.

Let $T$ be a maximal set of vertices $T = T_1 \cup T_2 \ldots T_k$ where each subset $T_I$ has all edges
to $G \setminus T$ in color $i$ and $|G \setminus T| \geq \ell$ constructed iteratively by adding at most $2\ell$ vertices at a time, with at most $\ell$ vertices being added to two sets $T_i$ at a time. In Claim \ref{Claim:Tissmall} we will show that $|T_i|$ is small for all $i.$ First, however, we give a full description how such a set $T$ can be generated iteratively by using Gallai partitions.

By Theorem~\ref{Thm:G-Part}, there exists a partition of $V(G)$ into
parts such that between each pair of parts there is exactly one
color and between the parts in general, there are at most two
colors. Consider such a partition with the smallest number of
parts, say $t_1$. Since ${\rm R}(C_{2\ell + 1}, C_{2\ell + 1})=
4\ell + 1$, it follows that $t_1\leq 4\ell$. Let $H_{1}^{1},
H_{2}^{1}, \ldots, H_{t_1}^{1}$ be parts of the partition.

If $2\leq t_1 \leq 3$, then by the minimality of $t_1$, we may
assume $t_1=2$. Suppose all edges between $H_{1}^{1}$ and
$H_{2}^{1}$ are color $c_1$. Without loss of generality, we assume
$|H_{1}^{1}|\leq |H_{2}^{1}|$. If $t_1\geq 4$, then we suppose all
edges among $H_{1}^{1}, H_{2}^{1}, \ldots, H_{t_1}^{1}$ are color
$c_{11}$ and $c_{12}$.

\begin{itemize}
\item[] $(a)$ If $t_1=2$, then we suppose that $|H_{1}^{1}|\leq \ell$.

\item[] $(b)$ If $t_1\geq 4$, then for each $i \
(1\leq i\leq t_1)$, let $A_{i}^{1}$ be the union of parts with color
$c_{11}$ to $H_{i}^{1}$ and $B_{i}^{1}$ be the union of parts with
color $c_{12}$ to $H_{i}^{1}$, and then we suppose that there exists
$H_{x_1}^{1}$ such that $|A_{x_1}^{1}|\leq \ell$ and
$|B_{x_1}^{1}|\leq \ell$.
\end{itemize}

Because there is no rainbow triangle in $H_{2}^{1}$ or
$H_{x_1}^{1}$, by Theorem~\ref{Thm:G-Part}, there exists a partition
of $V(H_{2}^{1})$ or $V(H_{x_1}^{1})$ into parts such that between
each pair of parts there is exactly one color and between the parts
in general, there are at most two colors. Consider such a
$H_{2}^{1}$-partition or $H_{x_1}^{1}$-partition with the smallest
number of parts, say $t_2$, clearly, $2\leq t_2 \leq 4\ell$. Let
$H_{1}^{2}, H_{2}^{2}, \ldots, H_{t_2}^{2}$ be parts of the
$H_{2}^{1}$-partition.

Suppose $2\leq t_2\leq 3$. By the minimality of $t_2$, we may assume
$t_2=2$. Suppose all edges between $H_{1}^{2}$ and $H_{2}^{2}$ are
color $c_2$. Without loss of generality, we assume $|H_{1}^{2}|\leq
|H_{2}^{2}|$. If $t_2\geq 4$, then we suppose all edges among
$H_{1}^{2}, H_{2}^{2}, \ldots, H_{t_2}^{2}$ are color $c_{21}$ and
$c_{22}$.

\begin{itemize}
\item[] $(a)$ If $t_2=2$, then we suppose that $|H_{1}^{2}|\leq \ell$.

\item[] $(b)$ If $t_2\geq 4$, then for each $i \ (1\leq i\leq t_2)$, let
$A_{i}^{2}$ be the set of parts with color $c_{21}$ to $H_{i}^{2}$
and $B_{i}^{2}$ be the set of parts with color $c_{22}$ to
$H_{i}^{2}$, and then we suppose that there exists $H_{x_2}^{2}$
such that $|A_{x_2}^{2}|\leq \ell$ and $|B_{x_2}^{2}|\leq \ell$.
\end{itemize}

Because there is no rainbow triangle in $H_{2}^{2}$ or
$H_{x_2}^{2}$, by Theorem~\ref{Thm:G-Part}, there exists a partition
of $V(H_{2}^{2})$ or $V(H_{x_2}^{2})$ into parts such that between
each pair of parts there is exactly one color and between the parts
in general, there are at most two colors. Consider such a
$H_{2}^{2}$-partition or $H_{x_2}^{2}$-partition with the smallest
number of parts, say $t_3$, clearly, $2\leq t_3 \leq 4\ell$. Let
$H_{1}^{3}, H_{2}^{3}, \ldots, H_{t_3}^{3}$ be parts of the
$H_{2}^{2}$-partition or $H_{x_2}^{2}$-partition.

Continue the process outlined above. Then there exists an integer $s$ such that
\begin{itemize}
\item $s$ is maximum.

\item For each $i \ (1\leq i\leq s)$, if $t_i=2$, then we suppose all edges between $H_{1}^{i}$ and $H_{2}^{i}$ are
color $c_i$; if $t_i\geq 4$, then we suppose all edges among
$H_{1}^{i}, H_{2}^{i}, \ldots, H_{t_{i}}^{i}$ are color $c_{i1}$ and
$c_{i2}$.

\item Let $H_{1}^{i}, H_{2}^{i}, \ldots, H_{t_i}^{i}$ be parts of the $H_{2}^{i-1}$-partition or $H_{x_{i-1}}^{i-1}$-partition for each $i \ (1\leq i \leq s)$;

\item For each $i \ (1\leq i\leq s)$, $|H_{1}^{i}|\leq \ell$ if $t_i=2$; there exists $H_{x_i}^{i}$ such that
$|A_{x_i}^{i}|\leq \ell$ and $|B_{x_i}^{i}|\leq \ell$ if $t_i\geq
4$.

\end{itemize}
\small\small
\begin{center}
\begin{tabular}{|c|c|c|c|}
\hline Step & Parts & Gallai-partition & Conditions \\[0.1cm]
\cline{1-4} $1$ & $G=K_n$ & $H_1^1,H_2^1,\ldots,H_{t_1}^1$ &
$\left\{
\begin{array}{ll}
|H^1_1|\leq \ell, &\mbox{{\rm if}~$t_1=2$,}\\
\exists~part~H^{1}_{x_1},s.t.,|A^{1}_{x_1}|\leq \ell,
~|B^{1}_{x_1}|\leq \ell,&\mbox{{\rm if}~$t_1\geq 4$}.
\end{array}
\right.$\\[0.1cm]
\cline{1-4} $2$ &$H^1_2$ or $H^{1}_{x_1}$ &
$H_1^2,H_2^2,\ldots,H_{t_2}^2$ & $\left\{
\begin{array}{ll}
|H^2_1|\leq \ell, &\mbox{{\rm if}~$t_2=2$,}\\
\exists~part~H^{2}_{x_2},s.t.,|A^{2}_{x_2}|\leq \ell,
~|B^{2}_{x_2}|\leq \ell,&\mbox{{\rm if}~$t_2\geq 4$}.
\end{array}
\right.$\\[0.1cm]
\cline{1-4} $3$ &$H^2_2$ or $H^{2}_{x_2}$ &
$H_1^3,H_2^3,\ldots,H_{t_3}^3$ & $\left\{
\begin{array}{ll}
|H^3_1|\leq \ell, &\mbox{{\rm if}~$t_3=2$,}\\
\exists~part~H^{3}_{x_3},s.t.,|A^{3}_{x_3}|\leq \ell,~|B^{3}_{x_3}|\leq
\ell,&\mbox{{\rm if}~$t_3\geq 4$}.
\end{array}
\right.$\\[0.1cm]
\cline{1-4} $\cdots$ & $\cdots \cdots$ & $\cdots \cdots$ &
$\cdots \cdots$\\[0.1cm]
\cline{1-4} $s$ &$H^{s-1}_2$ or $H^{s-1}_{x_{s-1}}$ &
$H_1^{s},H_2^{s},\ldots,H_{t_{s}}^{s}$ & $\left\{
\begin{array}{ll}
|H^{s}_1|\leq \ell, &\mbox{{\rm if}~$t_s=2$,}\\
\exists~part~H^{s}_{x_{s}},s.t.,|A^{s}_{x_s}|\leq \ell,
~|B^{s}_{x_s}|\leq \ell,&\mbox{{\rm if}~$t_s\geq 4$}.
\end{array}
\right.$\\[0.1cm]
\cline{1-4}
\end{tabular}
\end{center}
\begin{center}
{Table 1. The process for Theorem~\ref{Thm:OddCycles}}.
\end{center}

After the above $s$ steps, there exists one special part $H_2^s$ if $t_s=2$, or $H_{x_s}^s$ if $t_s\geq 4$. Consider all the parts outside $H_2^s$
(if $t_s=2$) or $H_{x_s}^s$ (if $t_s\geq 4$). Note that their sizes are each at most $\ell$. Let $T_i^s \ (1\leq i\leq k)$ be the union of parts with edges colored $i$ to $H_2^s$ (if $t_s=2$) or $H_{x_s}^s$ (if $t_s\geq 4$). Let $T^s  = T_{1}^s  \cup T_{2}^s  \cup \dots \cup T_{k}^s $.

\begin{claim}\label{Claim:Tissmall}
For each $i$ with $1 \leq i \leq k$, we have $|T_{i}^s | \leq \ell$
and furthermore, $T_{i}^s = \emptyset$ for some $i$.
\end{claim}

\begin{proof}
Assume, to the contrary, that there exists a $T_{i}^s $ such that
$|T_{i}^s |\geq \ell+1$. Furthermore, there exists some step $a \
(1\leq a\leq s)$ satisfying the following conditions.

\begin{itemize}
\item $a$ is minimum.

\item After the $a$ steps, there exists one part $H_2^{a}$ if $t_{a}=2$;
$H_{x_{a}}^{a}$ if $t_{a}\geq 4$. Consider all the parts outside
$H_2^{a}$ (if $t_{a}=2$) or $H_{x_{a}}^{a}$ (if $t_{a}\geq 4$). Note
that their size are at most $\ell$. Let $T_i^a \ (1\leq i\leq k)$ be
the union of parts with edges colored $i$ to $H_2^a$ (if $t_a=2$) or
$H_{x_a}^a$ (if $t_a\geq 4$). Then there exists a $T_{j}^a$ such
that $|T_{j}^a |\geq \ell+1$.
\end{itemize}

Let $T^a = T_{1}^a  \cup T_{2}^a  \cup \dots \cup T_{k}^a $. Then
$|T^a|\leq (k+2)\ell$, and $|G|-|T^a|\geq n-(k+2)\ell=\ell \cdot
2^{k}+1-(k+2)\ell>\ell$. Clearly, there exists two parts in
$T_{j}^a$, say $H',H''$, such that the edges from $H'$ to $H''$ are
colored $j$. Note that the edges from $T_{j}^a$ to $G-T^a$ are also
colored $j$. Then there is an odd cycle of length $2\ell+1$, a
contradiction.

We next show that $T_{i}^s = \emptyset$ for some $i$. If $k \geq 4$,
then $2^{k}\geq 2k+3$, and hence
$$
|G\setminus T^s| \geq [\ell 2^{k} + 1] - k\ell \geq (\ell - 1)k +
3\ell.
$$
By Theorem~\ref{Thm:EvenUp}, there is a monochromatic copy of
$C_{2\ell}$ contained within $G \setminus T^s$. Since $T_{i}^s \neq
\emptyset$ for all $i$, this cycle can easily be extended to a
monochromatic copy of $C_{2\ell + 1}$, for a contradiction. We may
therefore assume $k = 3$. Then $|G
\setminus T^s| \geq 5\ell + 1$.

Within $G' = G \setminus T^s$, consider a Gallai partition and let
$r$ be the number of parts in this partition of order at least
$\ell$ and suppose red (color $1$) and blue (color $2$) are the
colors appearing on edges between the parts of the partition. If $r
\geq 2$, then two of these large parts, say with red edges between,
along with any vertex of $T_{1}^s$, produces a red copy of $C_{2\ell
+ 1}$. This means we may immediately assume that $r \leq 1$. If $r =
1$ say with $H_{1}$ as the large part, then let $G_{R}$ and $G_{B}$
be the sets of vertices in $G' \setminus H_{1}$ with all red and
blue (respectively) edges to $H_{1}$. If $|G_{R} \cup T_{1}^s| \geq
\ell + 1$ or $|G_{B} \cup T_{2}^s| \geq \ell + 1$, then there is a
red or blue copy of $C_{2\ell + 1}$ respectively, meaning that
$|G_{R} \cup T_{1}^s|, |G_{B} \cup T_{2}^s| \leq \ell$. Then $G_{R}$
and $G_{B}$ can be added to $T^s$ to produce a larger set than $T^s$
with the same properties, contradicting the maximality of $s$. We
may therefore assume $r = 0$. Within $G'$, every vertex has degree
at least $|G'| - (\ell - 1)$ when restricted to the red and blue
edges. Since $|G'| \geq 5\ell + 1$, we have
$$
d(v) \geq 5\ell - (\ell - 1) = 4\ell + 1 > [2\ell] + [2\ell] - 3
$$
so, by Lemma~\ref{Lemma:ColE-G}, there must exist either a red path
or a blue path of order at least $2\ell$. This path along with a
vertex of $T^s$ with appropriately colored edges produces a
monochromatic copy of $C_{2\ell + 1}$, a contradiction, completing
the proof of Claim~\ref{Claim:Tissmall}.
\end{proof}

Consider a Gallai partition of $G \setminus T^s$ with the minimum
number of parts $t$ and let $H_{1}, \dots, H_{t}$ be the parts of
the partition where $|H_{1}| \geq |H_{2}| \geq \dots \geq |H_{t}|$,
say with $|H_{1}| = b$. Certainly $b \geq 2$ because otherwise $G
\setminus T^s$ would be a $2$-coloring with
$$
|G \setminus T^s| \geq \ell \cdot 2^{k} + 1 - k\ell = \ell(2^{k} -
k) + 1 \geq 4\ell + 1,
$$
producing the desired monochromatic cycle by
Theorem~\ref{Thm:2-color}. Without loss of generality, suppose red
(color $1$) and blue (color $2$) are the two colors appearing on
edges between parts in the Gallai partition. Note that the choice of
$t$ to be minimum implies that both red and blue are either
connected or absent in the reduced graph so in particular, every
part has red edges to at least one other part and blue edges to at
least one other part (note that if $t = 2$, there would necessarily
be only one such color).

If $2\leq t \leq 3$, then by the minimality of $t$, we may
assume $t=2$, say with corresponding parts $H_{1}$ and
$H_{2}$ with all red (color $1$) edges in between the
two parts. From the maximum $s$, we may assume
$|H_{1}|\geq |H_{2}|\geq \ell$. Note that $T_{1}^s = \emptyset$. By Lemma~\ref{Lemma:SmallOutside} we get \beqs
n & = & |H_{1} \cup T_{k}^s| + |H_{2} \cup T_{2}^s \cup T_{3}^s \cup \dots \cup T_{k - 1}^s|\\
~ & \leq & 2[{\rm gr}_{k - 1}(K_{3} : C_{2\ell + 1}) - 1]\\
~ & = & 2(\ell \cdot 2^{k - 1})\\
~ & < & \ell \cdot 2^{k} + 1, \eeqs a contradiction.

We may therefore assume that $t\geq 4$. Then we have the following claim.

\begin{claim}\label{Claim:LargestPartn}
$|H_{1}| \leq \ell$.
\end{claim}

\begin{proof}
Suppose not, so $|H_{1}| \geq \ell + 1$. Let $r$ be the number of
parts with $H_{i}$ with $|H_{i}| \geq \ell + 1$ so $|H_{1}| \geq
|H_{2}| \geq \dots \geq |H_{r}| \geq \ell + 1$ and call these parts
\emph{large}. Certainly $r \leq 5 = R(K_{3}, K_{3}) - 1$ since any
monochromatic triangle in the reduced graph among these large parts
would yield a monochromatic copy of $C_{2\ell + 1}$. We break the
remainder of the proof into cases based on the value of $r$.

\setcounter{case}{0}
\begin{case}
$r = 5$.
\end{case}
Since there can be no monochromatic triangle in the reduced graph
restricted to the $5$ large parts, the reduced graph must be the
unique $2$-coloring of $K_{5}$ containing two complementary copies
of $C_{5}$. Either one of these cycles yields a monochromatic copy
of $C_{2\ell + 1}$, completing the proof of this case.

\begin{case}\label{Case:r4}
$r = 4$.
\end{case}

If $t = 4$, then there are no vertices in $(G \setminus T^s)
\setminus (H_{1} \cup \dots \cup H_{4})$, so since $|H_{i}| \geq
\ell + 1$, no part can contain any red or blue edges as such an edge
would yield a monochromatic copy of $C_{2\ell + 1}$. This also means
that $T_{1}^s = T_{2}^s = \emptyset$. Define $H_{1}' = H_{1} \cup
T_{3}^s$, $H_{2}' = H_{2} \cup T_{4}^s \cup T_{5}^s \cup \dots \cup
T_{k}^s$ and let $H_{i}' = H_{i}$ for $i \geq 3$. Then $|H_{i}'|
\leq {\rm gr}_{k - 2}(K_{3} : C_{2\ell + 1}) - 1$ for $i \in \{3,
4\}$ and by Lemma~\ref{Lemma:SmallOutside} (which is made possible
by the fact that $|T_{i}^s| \leq \ell$), we have $|H_{i}'| \leq {\rm
gr}_{k - 2}(K_{3} : C_{2\ell + 1}) - 1$ for $i \in \{1, 2\}$. We
therefore get that \beqs
n & = & \sum_{i = 1}^{4} |H_{i}'| \\
~ & \leq & 4[{\rm gr}_{k - 2}(K_{3} : C_{2\ell + 1}) - 1]\\
~ & = & 4(\ell \cdot 2^{k - 2}) ~({\rm by~induction~on}~k)\\
~ & < & \ell \cdot 2^{k} + 1, \eeqs a contradiction. This means that
$t \geq 5$ so there is at least one vertex $v$ in $(G \setminus T^s)
\setminus (H_{1} \cup \dots \cup H_{4})$. The reduced graph
restricted to the large parts could either be two complementary
copies of $P_{4}$ or a matching in one color with all other edges
between parts in the other color.

First suppose the reduced graph consists of a red matching, say
$H_{1}H_{2}$ and $H_{3}H_{4}$ with all blue edges otherwise in
between the parts. In order to avoid creating a red copy of
$C_{2\ell + 1}$ using edges between $H_{1}$ and $H_{2}$ along with
$v$, the vertex $v$ must have all blue edges to one of $H_{1}$ or
$H_{2}$ and similarly to one of $H_{3}$ or $H_{4}$, say $H_{1}$ and
$H_{3}$. Then the blue edges between $H_{1}$ and $H_{3}$ along with
$v$ yield a blue copy of $C_{2\ell + 1}$, for a contradiction.

Finally suppose the reduced graph is two complementary copies of
$P_{4}$, say $H_{1}H_{2}H_{3}H_{4}$ in red and the remaining edges
in blue. To avoid creating a blue copy of $C_{2\ell + 1}$ using the
blue edges between $H_{1}$ and $H_{4}$ along with $v$, the vertex
$v$ must have all red edges to either $H_{1}$ or $H_{4}$, suppose
$H_{1}$. In order to avoid creating a red copy of $C_{2\ell + 1}$
using the red edges between $H_{1}$ and $H_{2}$ along with $v$, the
vertex $v$ must have all blue edges to $H_{2}$. In order to avoid
creating a blue copy of $C_{2\ell + 1}$ using the blue edges between
$H_{2}$ and $H_{4}$ along with $v$, the vertex $v$ must have all red
edges to $H_{4}$. Then $vH_{1}H_{2}H_{3}H_{4}v$ induces a red copy
of $C_{5}$ in the reduced graph, yielding a red copy of $C_{2\ell +
1}$ in $G$ for a contradiction, completing the proof in this case.

\begin{case}
$r = 3$.
\end{case}

The cycle among the $3$ large parts cannot be monochromatic so
suppose, without loss of generality, that the edges from $H_{2}$ to
$H_{3}$ are blue and all other edges between these parts are red.
Let $A$ be the set of vertices in $(G \setminus T^s) \setminus (H_{1} \cup H_{2}
\cup H_{3})$ with blue edges to $H_{1}$ and $H_{3}$ and red edges to
$H_{2}$, let $B$ be the set with red edges to $H_{2}$ and $H_{3}$
and blue edges to $H_{1}$, and let $C$ be the set with blue edges to
$H_{1}$ and $H_{2}$ and red edges to $H_{3}$. Note that any or all
of these sets of vertices may be empty and $G = H_{1} \cup H_{2}
\cup H_{3} \cup A \cup B \cup C \cup T^s$. Also note that $T_{1}^s =
T_{2}^s = \emptyset$.

Either $A$ or $C$ must be empty since the blue edges between $H_{2}$
and $H_{3}$ along with a blue path of the form $H_{2}CH_{1}AH_{3}$
yields a blue copy of $C_{2\ell + 1}$. Without loss of generality,
suppose $C = \emptyset$. Each part of $H_{2}$ and $H_{3}$ contains
no red or blue edges and the induced subgraph $H_{1} \cup A \cup B$
contains no red edges. Let $H_{2}' = H_{2} \cup T_{3}^s$ and let
$H_{3}' = H_{3} \cup T_{4}^s \cup T_{5}^s \cup \dots \cup T_{k}^s$
so by Lemma~\ref{Lemma:SmallOutside} (which is made possible by the
fact that $|T_{i}^s| \leq \ell$), we have $|H_{i}'| \leq {\rm gr}_{k
- 2}(K_{3} : C_{2\ell + 1}) - 1$ for $i \in \{2, 3\}$. This yields
\beqs
n & = & |H_{2}'| + |H_{3}'| + |H_{1} \cup A \cup B|\\
~ & \leq & 2[{\rm gr}_{k - 2}(K_{3} : C_{2\ell + 1}) - 1] + [{\rm gr}_{k - 1}(K_{3} : C_{2\ell + 1}) - 1]\\
~ & = & 2(\ell \cdot 2^{k - 2}) + (\ell \cdot 2^{k - 1})\\
~ & < & \ell \cdot 2^{k} + 1, \eeqs a contradiction, completing the
proof of this case.

\begin{case}
$r = 2$.
\end{case}

Without loss of generality, suppose the edges between $H_{1}$ and
$H_{2}$ are red. Let $A$ be the set of vertices in $(G \setminus T^s)
\setminus (H_{1} \cup H_{2})$ with red edges to $H_{1}$ and blue
edges to $H_{2}$, let $B$ be the set with blue edges to $H_{1} \cup
H_{2}$, and let $C$ be the set with blue edges to $H_{1}$ and red
edges to $H_{2}$. No vertex $u \in B$ can have red edges to both $v
\in A$ and $w \in C$ since the red edges between $H_{1}$ and $H_{2}$
along with a red path of the form $H_{1}vuwH_{2}$ would yield a red
copy of $C_{2\ell + 1}$ in $G$.

Note that $T_{1}^s = \emptyset$. If $|A \cup B| \geq \ell + 1$ (or
similarly $|B \cup C| \geq \ell + 1$), then neither $A \cup B$ nor
$H_{2}$ (respectively neither $B \cup C$ nor $H_{1}$) can contain
any blue edges.

If both $|A \cup B| \geq \ell + 1$ and $|B \cup C| \geq \ell + 1$,
then we claim that one of $A,B,C$ must be an empty set. Assume, to
the contrary, that $A\neq \emptyset$, $B\neq \emptyset$, and $C\neq
\emptyset$. To avoid a blue $C_{2\ell + 1}$ in $A\cup B\cup H_2$,
the edges among the parts in $A\cup B$ are all red. By the same
reason, the edges among the parts in $B\cup C$ are all red. Then
there is a red $C_{2\ell + 1}$, a contradiction. So one of $A,B,C$
must be an empty set.

Suppose that $B = \emptyset$. Assuming the first case, with $|A|
\geq \ell + 1$ and $|C| \geq \ell + 1$, $A$ and $C$ must also
contain no red edges. With no red or blue edges, $A$ and $C$ must
each be single parts of the Gallai partition, contradicting the
assumption that $r = 2$ (since $|A|, |C| \geq \ell + 1$).

Suppose that $B\neq \emptyset$. Without loss of generality, let
$A=\emptyset$. Then $|B|
> \ell$. We claim that $|C|\leq 1$. Assume, to
the contrary, that $|C|\geq 2$. Then the edges among the parts in
$B\cup C$ are all red. Choose $u,v\in B$ such that the edge $uv$ is
red. Choose two vertices in $C$ and they together with $H_1\cup
H_2\cup \{u,v\}$ form a red $C_{2\ell + 1}$, a contradiction. So
$|C|\leq 1$, and hence $|C|=1$, otherwise, contradicting to the
minimality of $t$. So $T_{1}^s = T_{2}^s = \emptyset$ and \beqs
n & = & |H_{1} \cup C | + |H_{2} \cup T_{k}^s| + |B \cup T_{3}^s \cup \dots \cup T_{k - 1}^s|\\
~ & \leq & 3[{\rm gr}_{k - 2}(K_{3} : C_{2\ell + 1}) - 1] \\
~ & = & 3(\ell \cdot 2^{k - 2})\\
~ & < & \ell \cdot 2^{k} + 1, \eeqs a contradiction, meaning that we
may assume $|A| \leq \ell$.

Next suppose one of $|A \cup B|$ or $|B \cup C|$ is at least $\ell +
1$ and the other is not, say $|A \cup B| \geq \ell + 1$. Suppose
further that $|A| \geq \ell + 1$ so $A$ contains no red or blue
edges. By Lemma~\ref{Lemma:SmallOutside} (which is made possible by
the fact that $|T_{i}^s| \leq \ell$), we get \beqs
n & = & |H_{1} \cup B \cup C \cup T_{2}^s| + |H_{2} \cup T_{k}^s| + |A \cup T_{3}^s \cup \dots \cup T_{k - 1}^s|\\
~ & \leq & [{\rm gr}_{k - 1}(K_{3} : C_{2\ell + 1}) - 1] + 2[{\rm gr}_{k - 2}(K_{3} : C_{2\ell + 1}) - 1] \\
~ & = & (\ell \cdot 2^{k - 1}) + 2(\ell \cdot 2^{k - 2})\\
~ & < & \ell \cdot 2^{k} + 1, \eeqs a contradiction. Then again
using Lemma~\ref{Lemma:SmallOutside}, we get \beqs
n & = & |H_{1} \cup B \cup C \cup T_{k}^s| + |H_{2} \cup A \cup T_{2}^s \cup T_{3}^s \cup \dots \cup T_{k - 1}^s|\\
~ & \leq & 2[{\rm gr}_{k - 1}(K_{3} : C_{2\ell + 1}) - 1]\\
~ & = & 2(\ell \cdot 2^{k - 1})\\
~ & < & \ell \cdot 2^{k} + 1, \eeqs a contradiction. In fact, the
same analysis as this last subcase also applies when both $|A \cup
B| \leq \ell$ and $|B \cup C| \leq \ell$. By
Lemma~\ref{Lemma:SmallOutside}, we get \beqs
n & = & |H_{1} \cup B \cup C \cup T_{k}^s| + |H_{2} \cup A \cup T_{2}^s \cup T_{3}^s \cup \dots \cup T_{k - 1}^s|\\
~ & \leq & 2[{\rm gr}_{k - 1}(K_{3} : C_{2\ell + 1}) - 1]\\
~ & = & 2(\ell \cdot 2^{k - 1})\\
~ & < & \ell \cdot 2^{k} + 1, \eeqs completing the proof of this
case.

\begin{case}\label{Case:LastCase}
$r = 1$.
\end{case}

Let $A$ be the set of vertices in $(G \setminus T^s) \setminus
H_{1}$ with red edges to $H_{1}$ and let $B$ be the set with blue
edges to $H_{1}$. If $|A| \leq \ell$ and $|B| \leq \ell$, we move
both $A$ and $B$ to $T^s$, contradicting the maximality of $s$. If
one of these sets is large and the other is not, say $|A| \geq \ell
+ 1$ and $|B| \leq \ell$, then neither $H_{1}$ nor $A$ can contain
any red edge (and $T_{1}^s = \emptyset$). Note that $A$ and $B$
consist of parts of the Gallai partition with order at most $\ell$.
By Lemma~\ref{Lemma:2l+1}, we know that $|A| \leq 2\ell$. So we may
apply Lemma~\ref{Lemma:SmallOutside} (which is made possible by the
fact that $|T_{i}| \leq \ell$ and $|B \cup T_{2}^s| \leq \ell$) to
get $|H_{1} \cup B \cup T_{2}^s \cup T_{3}^s \cup \dots \cup T_{k -
1}^s| \leq {\rm gr}_{k - 1}(K_{3} : C_{2\ell + 1}) - 1$. Putting
these together, we get \beqs
n & = & |A \cup T_{k}^s| + |H_{1} \cup B \cup T_{2}^s \cup T_{3}^s \cup \dots \cup T_{k - 1}^s|\\
~ & \leq & 3\ell+gr_{k - 1}(K_{3} : C_{2\ell + 1}) - 1\\
~ & = & 3\ell+\ell \cdot 2^{k - 1}\\
~ & < & \ell \cdot 2^{k} + 1, \eeqs a contradiction.

Finally suppose $|A| \geq \ell + 1$ and $|B| \geq \ell + 1$ so
$H_{1}$ contains neither red nor blue edges, $A$ contains no red
edge, $B$ contains no blue edge, and $T_{1}^s = T_{2}^s = \emptyset$.
Note that $A$ and $B$ consist of parts of the Gallai partition with
order at most $\ell$. By Lemma~\ref{Lemma:2l+1}, we know that $|A|
\leq 2\ell$ and $|B| \leq 2\ell$. By Lemma~\ref{Lemma:SmallOutside},
we have $|H_{1} \cup T_{3}^s \cup T_{4}^s \cup \dots \cup T_{k - 1}^s|
\leq {\rm gr}_{k - 2}(K_{3} : C_{2\ell + 1}) - 1$ and $|A \cup
T_{k}^s| \leq 3\ell$. Putting these together, we get \beqs
n & = & |H_{1} \cup T_{3}^s \cup T_{4}^s \cup \dots \cup T_{k - 1}^s| + |A| + |T_{k}^s| + |B|\\
~ & \leq & [{\rm gr}_{k - 2}(K_{3} : C_{2\ell + 1}) - 1] + 2\ell + \ell+ 2\ell\\
~ & = & (\ell \cdot 2^{k - 2}) + 5\ell\\
~ & < & \ell \cdot 2^{k} + 1, \eeqs a contradiction. This completes
the proof of Case~\ref{Case:LastCase} and
Claim~\ref{Claim:LargestPartn}.
\end{proof}

By Claim~\ref{Claim:LargestPartn}, we know that all parts in the
Gallai partition of $G\setminus T^s$ have at most $\ell$ vertices,
which means that they are all small parts. Since no such small part
can contain a copy of $C_{2\ell + 1}$, it follows that to get an odd
cycle $C_{2\ell + 1}$, we must use the edges among the small parts.

Suppose $k\geq 4$. Then $n = \ell \cdot 2^{k} + 1$, and
$|G|-|T^s|\geq 12\ell+1$. Let $H_{1}, H_{2}, \ldots, H_{t}$ be parts
of Gallai partition for $G\setminus T^s$. Suppose the edges between
parts of the Gallai partition are colored red or blue. If
$|H_{i}|=1$ for all $i \ (1\leq i\leq t)$, then there is a
monochromatic odd cycle $C_{2\ell + 1}$, a contradiction. Then there
exists some part, say $H_1$, such that $|H_1|\geq 2$. Let $A$ be the
set of vertices in $(G \setminus T^s) \setminus H_{1}$ with red
edges to $H_{1}$ and let $B$ be the set with blue edges to $H_{1}$.
Then $|H_1\cup A|\geq 6\ell$ or $|H_1\cup B|\geq 6\ell$. From Lemma
\ref{Lemma:5l-2}, there is a monochromatic odd cycle $C_{2\ell +
1}$, a contradiction.

We may assume $k = 3$ and so $n = \ell \cdot 2^{3} + 1 = 8\ell + 1$.
Let $H_{1}, H_{2}, \ldots, H_{t}$ be parts of Gallai partition for
$G\setminus T^s$. Suppose the edges between parts of the Gallai
partition are colored red or blue. From Lemmas \ref{Lemma:5l-2} and
\ref{Lemma:7l-3}, $T_1^{s}=\emptyset$ and $T_2^{s}=\emptyset$. For
each part $H_i \ (1\leq i\leq t)$, we let $A_i$ be the set of
vertices with red edges to $H_{i}$ and let $B_i$ be the set of
vertices with blue edges to $H_{i}$. Note that $|T_{3}^s|=|T^s|$.

\begin{claim}\label{Claim:Tl2}
$|T_{3}^s| \leq \frac{\ell}{2}$.
\end{claim}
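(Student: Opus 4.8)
The plan is to prove the contrapositive: assuming $|T_{3}| > \ell/2$, I would construct a monochromatic copy of $C_{2\ell + 1}$ in color $3$ (green), contradicting the hypothesis on $G$. The key feature of $T_{3}$ is that every one of its vertices is joined in green to all of $G \setminus T$, so $G$ contains a complete green bipartite graph between $T_{3}$ and $G \setminus T$; moreover $|G \setminus T| \geq 6\ell + 1$ is far larger than $|T_{3}|$, so there is no shortage of vertices on the large side. The only genuine obstruction to building a long green cycle is parity, since a complete bipartite graph has no odd cycle: a green $C_{2\ell + 1}$ must therefore use an odd number of green edges that lie strictly inside $G \setminus T$.

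First I would locate where green edges of $G \setminus T$ can live. By Claim~\ref{Claim:LargestPartn}, every part of the Gallai partition of $G \setminus T$ has order at most $\ell$ and all edges between parts are red or blue; hence every green edge of $G \setminus T$ lies inside a single part, and every green path of $G \setminus T$ has at most $\ell$ vertices. Next I would set up an interleaving construction: choosing $m \leq |T_{3}|$ vertices of $T_{3}$ as connectors, any family of vertex-disjoint green paths inside $G \setminus T$ occupying $2\ell + 1 - m$ vertices in total and cut into exactly $m$ pieces can be spliced alternately into a single green cycle on $2\ell + 1$ vertices, because each connector sits between two consecutive green pieces and all connector-to-piece edges are automatically green. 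A short count shows that hitting the odd length $2\ell + 1$ forces exactly $2\ell + 1 - 2m$ green edges to be used inside $G \setminus T$, and that when $|T_{3}| > \ell/2$ one may take $m$ near $|T_{3}|$, so that the green structure demanded inside $G \setminus T$ is only a linear forest on roughly $\ell$ vertices.

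The main obstacle is guaranteeing that $G \setminus T$ really contains green paths of the required total length, given that green edges are trapped inside parts of order at most $\ell$. I would resolve this by examining the green graph part by part: a part carrying enough green edges yields a long green path through the path estimate of Theorem~\ref{Thm:E-G} (or its two-colored refinement, Lemma~\ref{Lemma:ColE-G}), which feeds the construction above; and if green is too sparse inside every part, then the parts are dense in red and blue, so the large order of $G \setminus T$ forces a red or blue $C_{2\ell + 1}$ (via Theorem~\ref{Thm:2-color}, or by using $T_{1}$ or $T_{2}$ together with Lemma~\ref{Lemma:2l+1}), again a contradiction. Balancing these green-rich and green-poor regimes is exactly what pins the threshold at $\ell/2$, and this case split is the step I expect to demand the most care.
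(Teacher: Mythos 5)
Your first two paragraphs do match the opening move of the paper's proof: assuming $m = |T_3| > \ell/2$, the paper also splices green paths of $G \setminus T$ together through vertices of $T_3$ (all such edges being green) and concludes that the green structure inside $G \setminus T$ is limited -- phrased there as ``at most $2\ell - 3m$ disjoint green copies of $P_3$,'' which is the same idea as your linear-forest count of $2\ell+1-2m$ edges in at most $m$ pieces. The genuine gap is in your third paragraph, which is where essentially all of the paper's work lies. Your dichotomy is not exhaustive: the negation of ``the required green linear forest exists'' is \emph{not} ``green is sparse inside every part.'' For example, every part could contain a large green star: then green is dense in every part, Theorem~\ref{Thm:E-G} (correctly) yields nothing longer than a green $P_3$ per part, and with only a handful of parts (parts may have order up to $\ell$, so $G\setminus T$ may have as few as seven or eight of them) the available disjoint green paths fall far short of the forest you need. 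So the ``green-rich'' branch does not fire, and the premise of your ``green-poor'' branch is simply false. The clean consequence the paper extracts instead is structural: a maximal collection of disjoint green $P_3$'s has bounded size, so deleting a hitting set $F$ of at most $6\ell - 9m$ vertices reduces green to a matching. This is also exactly where the threshold $\ell/2$ enters: since $m > \ell/2$, the deletion is outweighed and $|G \setminus (T \cup F)| > 6\ell + 1 - |T_1| - |T_2|$; attributing the threshold to ``balancing the regimes'' is not the actual mechanism.

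Your endgame -- that the large order of $G\setminus T$ forces a red or blue $C_{2\ell+1}$ ``via Theorem~\ref{Thm:2-color}, or by using $T_1$ or $T_2$ together with Lemma~\ref{Lemma:2l+1}'' -- also does not go through as stated. Theorem~\ref{Thm:2-color} needs a genuinely $2$-colored complete graph, which $G \setminus T$ is not; and a monochromatic cycle in the (possibly very small) reduced graph need not blow up to a $C_{2\ell+1}$: blow-ups of even cycles are bipartite, and blow-ups of odd cycles through unbalanced parts have bounded circumference (e.g.\ $C_3$ blown up by parts of sizes $1,1,\ell$ has no cycle longer than $4$). Lemma~\ref{Lemma:2l+1} requires a \emph{single} color between all parts, which is not the situation here. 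The paper's resolution is precisely the machinery missing from your sketch: work in $G \setminus (T \cup F)$ where green is a matching; split according to Claim~\ref{Claim:Tissmall} on whether $T_1$ (or $T_2$) is empty; when $T_1 \neq \emptyset$, aim at the asymmetric pair of a blue $C_{2\ell+1}$ or a \emph{shorter} red even cycle $C_{2(\ell - |T_1| + 1)}$, which is then lengthened to a red $C_{2\ell+1}$ by threading it alternately through $T_1$; and in all cases count parts of order $1$ and order $2$ against the exact cycle Ramsey numbers of Theorem~\ref{Thm:2-color} to reach a numerical contradiction. Without the hitting-set reduction, the order-$1$/order-$2$ part counting, and the shortened-cycle-plus-$T_1$ trick, the contradiction in your green-poor regime is unsupported.
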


\begin{proof}
Suppose, for a contradiction, that $|T_{3}^s| > \frac{\ell}{2}$ and
call color $3$ green. Let $m = |T_{3}^s|$ so by
Claim~\ref{Claim:Tissmall}, this means that $\frac{\ell}{2} < m \leq
\ell$. Let $x$ be the maximum number of disjoint copies of $P_{3}$
in green in $G \setminus T^s$. We claim that $x\leq 2\ell - 3m$.
Assume, to the contrary, that $x\geq 2\ell - 3m+1$. Choose $2\ell -
3m+1$ disjoint copies of $P_{3}$ in green in $G \setminus T^s$, say
$P_3^{i}=u_iv_iw_i \ (1\leq i\leq 2\ell - 3m+1)$. Let $y$ be the
maximum number of disjoint copies of $P_{2}$  in green in $G
\setminus T^s \setminus \{P_3^{i}\,|\,1\leq i\leq 2\ell - 3m+1\}$,
say $P_2^{i}=a_ib_i \ (1\leq i\leq y)$. We claim that $y\geq
4m-2\ell-1$. Assume, to the contrary, that $y\leq 4m-2\ell-2$. By
deleting all vertices $\{P_3^{i}\,|\,1\leq i\leq x\}\cup
\{P_2^{i}\,|\,1\leq i\leq y\}$ in $G \setminus T^s$, there is no
green edges in the resulting graph, and hence \beqs |G \setminus T^s
\setminus\{P_3^{i}\,|\,1\leq i\leq x\}\setminus
\{P_2^{i}\,|\,1\leq i\leq y\}| & \geq & (8\ell + 1) - \ell-3(2\ell - 3m+1) \\
~ & ~ &-2(4m - 2\ell - 2)\\
~ & = & 5\ell + m + 2\\
~ & > & 4\ell + 1. \eeqs By Theorem~\ref{Thm:2-color}, there is a
monochromatic copy of $C_{2\ell + 1}$ in the reduced graph,  a
contradiction. Let $T_{3}^s=\{d_1,d_2,\ldots,d_m\}$. Then
$d_1a_1b_1d_2a_2b_2d_3\ldots
a_{4m-2\ell-1}b_{4m-2\ell-1}$\\
$d_{4m-2\ell}u_1v_1w_1\ldots u_{2\ell - 3m+1}v_{2\ell -
3m+1}w_{2\ell - 3m+1}d_1$ is a green odd cycle $C_{2\ell + 1}$, a
contradiction. This means that we can delete a set $F$ of at most
$6\ell - 9m$ vertices from $G \setminus T^s$ to leave behind at most
a matching in green, and hence there is a Gallai partition of $G
\setminus (T^s \cup F)$ in which all parts have order at most $2$.
We consider such a partition for the remainder of this proof of Claim~\ref{Claim:Tl2}.

It is clear that $|G \setminus (T^s \cup F)| \geq 6\ell + 2$. Let
$r$ be the number of parts of order $2$ in the Gallai partition of
$G \setminus (T^s \cup F)$. We claim that $r \leq 2\ell + 2$.
Assume, to the contrary, that $r\geq 2\ell + 3$. From
Theorem~\ref{Thm:2-color}, there would exist a monochromatic odd
cycle of length $\ell + 1$ or $\ell + 2$ in the reduced graph.
Without loss of generality, we suppose that $\ell + 1$ is odd. Note
that each part has two vertices and there is an odd cycle
$C_{2\ell+1}$, a contradiction.

Since there are at least $6\ell + 2$ vertices, this means there are
actually at least
$$
(6\ell + 2) - 2(2\ell + 2) = 2\ell - 2
$$
parts of order $1$. Since $\ell \geq 6$, there is at least one such
part, say $X$. If $r = 2\ell + 2$, then the reduced graph on these
parts along with $x$ ($X=\{x\}$) must produce a monochromatic odd
cycle of length $\ell + 1$ or $\ell + 2$, making a monochromatic
copy of $C_{2\ell + 1}$ in $G$. This means $r \leq 2\ell + 1$ so
there are at least
$$
(6\ell + 2) - 2(2\ell + 1) = 2\ell
$$
parts of order $1$. Therefore, there must be at least $(2\ell + 1) +
2\ell = 4\ell + 1$ parts in the Gallai partition. By
Theorem~\ref{Thm:2-color}, there is a monochromatic copy of
$C_{2\ell + 1}$ in the reduced graph, and therefore in $G$.
\end{proof}

Let $H_{1}=\{d_1,d_2,\ldots,d_b\}$. From Lemma~\ref{Lemma:5l-2}, we
have $|H_i|\leq \ell$ for all $i \ (1\leq i\leq t)$, and hence
$|H_1|+|G_R|\leq 5\ell-2$ and $|G_{B}|=|G|-|T^s|-|H_1|\geq 2\ell$.

Our final claim shows that the largest part $H_{1}$ is even smaller
than previously claimed.

\begin{claim}\label{Claim:LargestPartn/2}
$|H_{1}| \leq \frac{\ell}{2}$.
\end{claim}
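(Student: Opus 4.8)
The plan is to suppose for contradiction that $s := |H_1|$ satisfies $\ell/2 < s \le \ell$ (the upper bound coming from Claim~\ref{Claim:LargestPartn}) and to force a monochromatic $C_{2\ell + 1}$. Since every part other than $H_1$ is joined to $H_1$ entirely in red or entirely in blue, I would write $A$ for the union of the parts sending red edges to $H_1$ and $B$ for those sending blue, and set $A^{+} = A \cup T_1$ and $B^{+} = B \cup T_2$, so that $H_1$, $A^{+}$, $B^{+}$, $T_3$ partition $V(G)$ with $H_1$ completely red to $A^{+}$ and completely blue to $B^{+}$. The first step is the size bookkeeping: from $s \le \ell$, $|T_3| \le \ell/2$ (Claim~\ref{Claim:Tl2}) and $n = 8\ell + 1$ one gets $|A^{+}| + |B^{+}| \ge 13\ell/2 + 1$, so the larger of the two, say $A^{+}$, has more than $3\ell$ vertices; the degenerate possibility $|A|, |B| \le \ell$ is ruled out immediately by this count (or by the maximality of $T$, as in Case~\ref{Case:LastCase}).

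The engine of the argument is the observation that $H_1$ can be threaded into any red path of $A^{+}$. Given a red path $u_1 u_2 \cdots u_q$ inside $A^{+}$, each of its $q$ ``gaps'' (the $q - 1$ internal gaps together with the closing gap $u_q u_1$) may be subdivided by at most one vertex of $H_1$, each of which is red to every $u_i$; filling the closing gap and $j - 1$ of the others produces a red cycle of length $q + j$ for every $j$ with $1 \le j \le \min(q, s)$. Choosing $q = 2\ell + 1 - s$ (which lies in $[s, 2\ell]$ precisely because $\ell/2 < s \le \ell$) then yields a red $C_{2\ell + 1}$. Hence, to avoid a red $C_{2\ell + 1}$, the longest red path contained in $A^{+}$ must have at most $2\ell - s < 3\ell/2$ vertices.

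With the red paths of $A^{+}$ thus bounded, I would invoke Theorem~\ref{Thm:E-G} to conclude that red is sparse on the more than $3\ell$ vertices of $A^{+}$, so that blue must dominate, and then extract a blue $C_{2\ell + 1}$ living inside $A^{+}$ (equivalently, show that the blue subgraph of $A^{+}$ has minimum degree large enough to run a Dirac-type argument as in Lemma~\ref{Lemma:2l+1}, or apply Theorem~\ref{Thm:2-color} to its reduced graph). The main obstacle is exactly this endgame: converting ``blue is dense'' into a blue cycle of the \emph{exact} odd length $2\ell + 1$. Two features make it delicate. First, edges internal to the parts of $A^{+}$ may be green, so a short longest red path does not by itself make blue dense on a per-vertex basis; this is where the bound $|T_3| \le \ell/2$ from Claim~\ref{Claim:Tl2} must be spent, to cap the total amount of green. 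Second, one must track which of $T_1, T_2, T_3$ is empty (Claim~\ref{Claim:Tissmall}), since whether a long blue path can be closed through a vertex of $T_2$, or must instead be realized as an odd cycle entirely within $A^{+} \cup B^{+}$, depends on it; the symmetric roles of $A^{+}$ with red and $B^{+}$ with blue should let me reduce to the convenient case. Once the blue cycle is produced in every subcase, the contradiction is complete and $|H_1| \le \ell/2$ follows.
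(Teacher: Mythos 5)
Your red half of the argument is sound and closely mirrors the paper's opening move (the paper threads $H_{1}$ through a system of disjoint red edges of $G_{R}$ rather than through a red path, but it is the same idea), so the conclusion that the longest red path in $A^{+}$ has at most $2\ell - s$ vertices is fine. The genuine gap is exactly the endgame you flag yourself: converting ``red is sparse on $A^{+}$'' into a blue $C_{2\ell+1}$. This step fails quantitatively, not just technically. First, Theorem~\ref{Thm:E-G} bounds only the \emph{number} of red edges, not the red degree of individual vertices, so no minimum-degree condition on the blue graph follows and no Dirac-type argument can be run on $A^{+}$. Second, your count only guarantees $|A^{+}| > 3\ell$ (about $13\ell/4$); even the colored Erd\H{o}s--Gallai lemma (Lemma~\ref{Lemma:ColE-G}), applied with red-path parameter $a = 2\ell + 1 - s$ and the per-vertex bound $d_{R}(v) + d_{B}(v) \geq |A^{+}| - s$ (green is confined to parts), yields only a blue path on roughly $|A^{+}| - 2\ell + 2 \approx 5\ell/4$ vertices --- far short of the $2\ell + 1$ vertices that any blue cycle confined to $A^{+}$ must contain. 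Third, in the hard regime $\ell/2 < s \leq 2\ell/3$, a monochromatic cycle in the reduced graph of $A$ obtained from Theorem~\ref{Thm:2-color} spans parts totalling at most $3s \leq 2\ell$ vertices, too few to blow up to a $C_{2\ell+1}$. So none of the three tools you name can close the case, and no amount of bookkeeping with $T_{1}, T_{2}, T_{3}$ repairs this.

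The paper's proof is organized around precisely this difficulty. For $s \geq (2\ell+1)/3$ it does essentially what you hope for: since $G_{R}$ can contain at most $2\ell - 2s$ disjoint red edges, deleting at most $4\ell - 4s$ vertices destroys \emph{all} red edges of $G_{R}$, and the remainder still exceeds $2\ell + 1$ vertices, so Lemma~\ref{Lemma:2l+1} produces a blue $C_{2\ell+1}$; note that this deletion is affordable only because $s$ is large, which is why the paper first proves $s \leq 2\ell/3$ rather than $s \leq \ell/2$ in one step. For $\ell/2 < s \leq 2\ell/3$ an entirely different mechanism is needed: the paper shows $|G_{B}| \geq 2\ell$, removes the longest red path of $G_{R}$, the longest blue path of $G_{B}$, and a small set of connector vertices (two vertices of $H_{1}$, a vertex of $T_{3}$, a green edge, and one spare vertex in each of $G_{R}$ and $G_{B}$), applies Theorem~\ref{Thm:EvenUp} to the more than $6\ell - 9$ remaining vertices to obtain a monochromatic $C_{2\ell-2}$, and then extends that cycle by exactly three vertices through the removed connectors, checking every possible color and location of the cycle. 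This ``shorter even cycle plus three-vertex extension'' device is the crux of the claim, and it is absent from your proposal; without it, or a genuinely new substitute for it, the proof does not go through.
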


\begin{proof}
Let $G_{R}$ (and $G_{B}$) be the sets of parts in $G \setminus
(H_{1} \cup T_{3}^s)$ with red (or blue respectively) edges to
$H_{1}$, say with $|G_{R}| \geq |G_{B}|$. We first show that $b =
|H_{1}| \leq \frac{2\ell}{3}$ so suppose that $\frac{2\ell + 1}{3}
\leq b \leq \ell$. By Claim~\ref{Claim:Tl2}, we know that $|T_{3}^s|
\leq \frac{\ell}{2}$ so
$$
G_{R} \geq \frac{(8\ell + 1) - \frac{\ell}{2} - b}{2}.
$$
Suppose that there are at least $2\ell + 1 - 2b$ disjoint red edges
within $G_{R}$, say $P_2^{i}=a_ib_i \ (1\leq i\leq 2\ell + 1 - 2b)$.
Then $d_1a_1b_1d_2a_2b_2d_3\ldots a_{2\ell + 1 - 2b}b_{2\ell + 1 -
2b}d_{2\ell + 2 - 2b}u_1\ldots $\\ $u_{3b - 2\ell - 1}d_1$ is a red
odd cycle $C_{2\ell + 1}$, where $u_1,u_2,\ldots,u_{3b - 2\ell - 1}$
are $3b - 2\ell - 1$ vertices in $G_R\setminus (\cup_{i=1}^{2\ell +
1 - 2b} P_2^{i})$, a contradiction. See
Figure~\ref{Fig:Construction} for an example of this construction.
We may therefore delete at most $2(2\ell - 2b)$ vertices from
$G_{R}$ to produce a subgraph $G_{R}'$ containing no red edges with
\beqs
|G_{R}'| & \geq & \frac{8\ell + 1 - \frac{\ell}{2} - b}{2} - 4\ell + 4b\\
~ & = & \frac{14b - \ell + 2}{4}\\
~ & \geq & \frac{25\ell + 20}{12} > 2\ell + 1 \eeqs so by
Lemma~\ref{Lemma:2l+1}, there is a blue copy of $C_{2\ell + 1}$
within $G_{R}'$, a contradiction. Thus, we may assume that
$\frac{\ell + 1}{2} \leq |H_{1}| \leq \frac{2\ell}{3}$.

\begin{figure}[H]
\begin{center}
\includegraphics{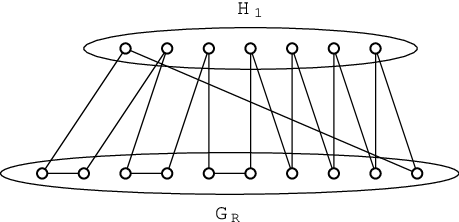}
\caption{Construction of a red copy of $C_{2\ell + 1}$
\label{Fig:Construction}}
\end{center}
\end{figure}

Let $P^{1}$ be a longest red path within $G_{R}$ and let $P^{2}$ be
a longest blue path within $G_{B}$. Note that $|P^{i}| \leq 2(\ell -
b) + 1 \leq \ell$ for $i \in \{1, 2\}$ to avoid creating a
monochromatic copy of $C_{2\ell + 1}$ and $|P^{1}|, |P^{2}| \geq 2$
by Lemma~\ref{Lemma:2l+1}. Let $H_{R_{1}}$ and $H_{R_{2}}$ be the
parts of the Gallai partition of $G \setminus T^s$ which contain the
end vertices of $P^{1}$ (where it is possible that $H_{R_{1}} =
H_{R_{2}}$) and similarly let $H_{B_{1}}$ and $H_{B_{2}}$ be the
parts containing the end vertices of $P^{2}$. Let $u_{1}u_{2} \dots
u_{|P^{1}|}$ be the vertices of $P^{1}$ and let $v_{1}v_{2} \dots
v_{|P^{2}|}$ be the vertices of $P^{2}$. Let $w_{1} \in T_{3}^s$ (if
$T_{3}^s \neq \emptyset$), $w_{2}, w_{3} \in H_{1}$, $w_{4} \in
G_{R} \setminus P^{1}$, $w_{5} \in G_{B} \setminus P^{2}$, and
$w_{6}w_{7}$ be any green edge within $G \setminus T_{3}^s$ (note
that such an edge must exist since otherwise $G\setminus T_{3}^s$ is
$2$-colored with more than $4\ell + 1$ vertices).

Let $G' = G \setminus (P^{1} \cup P^{2} \cup \{w_{1}, w_{2}, \dots,
w_{7}\})$ so $|G'| \geq (8\ell + 1) - (2\ell + 7) > 6\ell - 9.$ By
Theorem~\ref{Thm:EvenUp}, there is a monochromatic copy of $C_{2\ell
- 2}$ in $G'$. Note that if there is a green copy of $C_{2\ell -
2}$, then $C_{2\ell - 2}$ must use edges from $T_{3}^s$ to $G'
\setminus T_{3}^s$, since each part of the Gallai partition of $G'
\setminus T^s$ has at most $\frac{2\ell}{3}$ vertices. Let $e = uv$
be one such edge where $u \in T_{3}^s$ and $v \notin T_{3}^s$. By
replacing the edge $uv$ by the path $uw_6w_7w_1v$, we can get a
green copy of $C_{2\ell + 1}$, a contradiction. Then $C_{2\ell - 2}$
is red or blue. Without loss of generality, we assume that $C_{2\ell
- 2}$ is red. If there exists a vertex $u$ in $H_{1}$ such that $u
\in H_{1}\cap C_{2\ell - 2}$, then it follows from $C_{2\ell -
2}\nsubseteq H_{1}$ that there is an red edge of $C_{2\ell - 2}$
from $H_{1}$ to $G \setminus H_{1}$, say $uv$, where $v \in G_{R}$.
By replacing the edge $uv$ by the red path $uu_{1}u_{2}w_{3}v$, we
can get a red copy of $C_{2\ell + 1}$, a contradiction. If there are
two vertices in $G_{R}\cap C_{2\ell - 2}$, say $u$ and $v$, such
that their distance in the cycle $C_{2\ell - 2}$ is $d \ (d \leq
\ell-1)$, then by replacing this path by the red path
$ud_{1}f_{1}d_{2}f_{2}\dots f_{\frac{d}{2}}d_{\frac{d}{2}+1}v$ (if
$d$ is even) or $ud_{1}f_{1}d_{2}f_{2}\dots
f_{\frac{d-3}{2}}d_{\frac{d-3}{2}+1}u_{1}u_{2}d_{\frac{d-3}{2}+2}v$
(if $d$ is odd), we can get a red copy of $C_{2\ell + 1}$, where
$f_1,f_2,\ldots,f_{\frac{d}{2}}$ are $\frac{d}{2}$ vertices in
$G_{R}$ or $f_1,f_2,\ldots,f_{\frac{d-3}{2}}$ are $\frac{d-3}{2}$
vertices in $G_{R}\setminus \{u_{1},u_{2}\})$, a contradiction. Then
$|C_{2\ell - 2} \cap G_{R}|\leq 1$, and hence $2\ell - 3\leq
|C_{2\ell - 2} \cap G_{B}|\leq 2\ell - 2$. If $B_{1}=B_{2}$, then
$C_{2\ell - 2}$ contains a red edge $uv \in G_{B} \setminus (P^{2}
\cup H_{B_{1}} \cup H_{B_{2}})$, and so by replacing the edge with a
red path $uv_{1}w_{5}v_{|P^{2}|}v$, we can get a red copy of
$C_{2\ell + 1}$, a contradiction. If $B_{1}\neq B_{2}$, then we can
find an red edge $uv$ such that $uv \in G_{B} \setminus (P^{2} \cup
H_{B_{1}} \cup H_{B_{2}})$, or $u\in H_{B_{1}}\cup H_{B_{2}}$ and
$v\in G_{B} \setminus (P^{2} \cup H_{B_{1}} \cup H_{B_{2}})$. By
replacing the edge with a red path $uv_{1}w_{5}v_{|P^{2}|}v$ or
$uw_{5}v_{1}v_{|P^{2}|}v$, we can get a red copy of $C_{2\ell + 1}$,
a contradiction. Regardless of the color (red, blue, or green) and
location with respect to the sets (i.e. $H_{1}, G_{R}, G_{B},
\dots$), we may construct a monochromatic copy of $C_{2\ell + 1}$
using this monochromatic copy of $C_{2\ell - 2}$ and some vertices
of $G \setminus G'$, a contradiction.
\end{proof}

Let $G_{R}$ and $G_{B}$ denote the sets of vertices in $G \setminus
H_{1}$ with red and blue edges respectively to $H_{1}$. Without loss
of generality, suppose $|G_{R}| \geq |G_{B}|$.

From Claim \ref{Claim:LargestPartn/2}, $|H_{1}| \leq \frac{\ell}{2}$
for all $i \ (1\leq i\leq t)$. From Lemma~\ref{Lemma:5l-2}, we have
$|H_1|+|G_R|\leq 4\ell+\frac{\ell}{2}-2$, and hence $|G_{B}| \geq
\frac{5\ell}{2} + 6$. Let $P^{1}$ be a longest red path within
$G_{R}$ and let $P^{2}$ be a longest blue path within $G_{B}$. Note
that $|P^{i}| \leq 2(\ell - b) + 1$ for $i \in \{1, 2\}$ to avoid
creating a monochromatic copy of $C_{2\ell + 1}$ and $|P^{1}|,
|P^{2}| \geq 2$ by Lemma~\ref{Lemma:2l+1}. Let $H_{R_{1}}$ and
$H_{R_{2}}$ be the parts of the Gallai partition of $G \setminus
T^s$ which contain the end vertices of $P^{1}$ (where it is possible
that $H_{R_{1}} = H_{R_{2}}$) and similarly let $H_{B_{1}}$ and
$H_{B_{2}}$ be the parts containing the end vertices of $P^{2}$. Let
$u_{1}u_{2} \dots u_{|P^{1}|}$ be the vertices of $P^{1}$ and let
$v_{1}v_{2} \dots v_{|P^{2}|}$ be the vertices of $P^{2}$.

Let $G_{B}' = G_{B} \setminus (P^{2} \cup H_{B_{1}} \cup H_{B_{2}})$
so $|G_{B}'| \geq (\frac{5\ell}{2} + 6) - |P^{2}| - 2b + 2 = [2(\ell
- b) + 1 - |P^{2}|] + [\frac{\ell}{2} + 10] - 3$. By
Lemma~\ref{Lemma:ColE-G}, there is either a blue path of order
$2(\ell - b) + 1 - |P^{2}|$ or a red path of order at least
$\frac{\ell}{2} + 10$. The blue path, in combination with $P^{2}$
and $H_{1}$, would produce a blue copy of $C_{2\ell + 1}$ so we may
assume there is a red path in $G_{B}$ of order at least
$\frac{\ell}{2} + 10$. By the same argument, there is a blue path of
order at least $\frac{\ell}{2} + 10$ in $G_{R}$ since $|G_{R}| \geq
|G_{B}|$. In particular, these observations mean that there is a red
edge $x_{1}y_{1}$ in $G_{B} \setminus (P^{2} \cup H_{B_{1}} \cup
H_{B_{2}})$ and a blue edge $x_{2}y_{2}$ in $G_{R} \setminus (P^{1}
\cup H_{R_{1}} \cup H_{R_{2}})$.

Let $w_{1} \in T_{3}^s$ (if $T_{3}^s \neq \emptyset$), $w_{2}w_{3}$ be any green edge within $G \setminus T_{3}^s$ (note that such an edge must exist since otherwise $G\setminus T_{3}^s$ is $2$-colored with more than $4\ell + 1$ vertices), $w_{4}, w_{5} \in H_{1}$, and let $x_{1}y_{1}$ and $x_{2}y_{2}$ be defined as above. 
Let $Q^{1} = \{u_{1}, u_{2}, \dots, u_{\ell - 1}, u_{|P^{1}|}\}$ and
let $Q^{2} = \{v_{1}, v_{2}, \dots, v_{\ell - 1}, v_{|P^{2}|}\}$
where some vertices in these sets may not exist if $|P^{i}| < \ell$.
Finally let
$$
G' = G \setminus (\{w_{1}, w_{2}, w_{3}, w_{4}, w_{5}, x_{1}, y_{1},
x_{2}, y_{2}\} \cup Q^{1} \cup Q^{2})
$$
so $|G'| \geq |G| - (9 + 2\ell) > 6\ell - 9$. By
Theorem~\ref{Thm:EvenUp}, there exists a monochromatic copy of
$C_{2\ell - 2}$ within $G'$, say $C$.


If $C$ is green, then since all parts of the Gallai partition of $G
\setminus T^s$ have order at most $\frac{\ell}{2}$, $C$ must use
edges from $T_{3}^s$ to $G' \setminus T_{3}^s$. Let $e = uv$ be one
such edge with $u \in T_{3}^s$ and $v \notin T_{3}^s$. Then
replacing the edge $uv$ with the path $uw_{2}w_{3}w_{1}v$ produces a
green copy of $C_{2\ell + 1}$.

Since we are no longer applying the assumption that $|G_{R}| \geq
|G_{B}|$, we may assume, without loss of generality, that $C$ is red
and claim that a symmetric argument would hold if $C$ was blue. If
$C$ contains two vertices in $G_{R}$, say $u$ and $v$, at distance
(along $C$) at most $2$, then replacing this path with the red path
$uw_{4}u_{1}w_{5}v$ or $uw_{4}u_{1}u_{2}w_{5}v$ (of the appropriate
length) produces a red copy of $C_{2\ell + 1}$. Thus, there can be
no two vertices in $C \cap G_{R}$ at distance at most $2$ along $C$.
More generally, if $P^{1}$ is longer then there can be no two
vertices $u$ and $v$ in $G_{R} \cap C$ at distance at most $|P^{1}|$
along $C$.

Similarly, if there is an edge of $C$ from $H_{1}$ to $G_{R}$, say
$uv$ with $u \in H_{1}$ and $v\in G_{R}$, then we replace this edge
with the red path $uu_{1}u_{2}w_{4}v$ to produce a red copy of
$C_{2\ell + 1}$. This means there is no edge from $H_{1}$ to $G_{R}$
on $C$ and therefore, $C \cap H_{1} = \emptyset$.

If $C$ contains two vertices $u$ and $v$ with $u, v \in G_{B}
\setminus (P^{2} \cup H_{B_{1}} \cup H_{B_{2}})$ at distance at most
$2$ on $C$, then replacing the edge with a red path
$uv_{1}x_{1}v_{|P^{2}|}v$ or $uv_{1}x_{1}y_{1}v_{|P^{2}|}v$ of the
appropriate length produces a red copy of $C_{2\ell + 1}$. This
means $C$ cannot have two vertices at distance at most $2$ in $G_{B}
\setminus (P^{2} \cup H_{B_{1}} \cup H_{B_{2}})$. If $C$ contains
any vertex $u \in H_{B_{i}}$ for $i \in \{1, 2\}$, then since $C$
cannot be contained entirely within $H_{B_{i}}$, there is an edge of
$C$ from $H_{B_{i}}$ to $G \setminus H_{B_{i}}$, say $uv$ where $v
\notin H_{B_{i}}$. Supposing $i = 1$ without loss of generality, we
can replace the edge $uv$ with the red path $ux_{1}y_{1}v_{1}v$ to
produce a red copy of $C_{2\ell + 1}$. We therefore know that $C
\cap (H_{B_{1}} \cup H_{B_{2}}) = \emptyset$.

If $|P^{1}| \geq 5$ or $|H_{1}| \geq 4$, then we claim that the
distance of any two vertices in $C\cap G_{R}$ is at least $6$ in
$C$. Suppose, for a contradiction, that there exists two vertices
$u,v\in C\cap G_{R}$ such that their distance in $C$ is at most $5$.
Let $d$ be the distance between $u$ and $v$ in $C$. By replacing
this path by the red path $uw_{4}u_{1}u_{2}\dots u_{d}w_{5}v$ (if
$|P^{1}| \geq 5$) or $ud_{1}f_{1}d_{2}f_{3}\dots
f_{\frac{d-2}{2}}d_{\frac{d-2}{2}+1}u_{1}u_{2}d_{\frac{d-2}{2}+2}v$
(if $|H_{1}| \geq 4$ and $d$ is even) or $ud_{1}f_{1}d_{2}f_{3}\dots
f_{\frac{d-1}{2}+1}d_{\frac{d-1}{2}+2}v$ (if $|H_{1}| \geq 4$ and
$d$ is odd), we can get a red copy of $C_{2\ell + 1}$, a
contradiction, where $f_1,f_2,\ldots,f_{\frac{d-2}{2}}$ are
$\frac{d-2}{2}$ vertices in $G_{R}$ or
$f_1,f_2,\ldots,f_{\frac{d-1}{2}}$ are $\frac{d-1}{2}$ vertices in
$G_{R}\setminus \{u_{1},u_{2}\})$ and $d_1,d_2,\ldots,d_{b}$ are $b$
vertices in $H_{1}$. Thus, the distance of any two vertices in
$C\cap G_{R}$ is at least $6$ in $C$, and the distance of any two
vertices in $C\cap (G_{B} \setminus (P^{2} \cup H_{B_{1}} \cup
H_{B_{2}}))$ is at least $3$ in $C$. Then there are at most
$\frac{|C|}{6} + \frac{|C|}{3}$ vertices of $C$ belonging to $G'
\setminus P^{2}$. This means that there are at least $\frac{|C|}{2}$
vertices of $C$ belonging to $P^{2}$. Note that $|P^{2} \cap G'|
\leq 2(\ell - b) + 1 - \ell < \ell-1$ and $|C| = 2\ell - 2$, which
is impossible. Thus, we may assume $|P^{1}| \leq 4$ and $|H_{1}|
\leq 3$. Then $|P^{2}| \leq 2(\ell - b) + 1 \leq 2\ell - 3$.

At this point, we may redefine $Q^{2}$ to be $Q^{2 *} = \{v_{1},
v_{2}, \dots, v_{2\ell - 5}, v_{|P^{2}|}\}$ and redefine $G'$ to be
$$
G^{\prime *} = G \setminus (\{w_{1}, w_{2}, w_{3}, w_{4}, w_{5},
x_{1}, y_{1}, x_{2}, y_{2}\} \cup P^{1} \cup Q^{2 *})
$$
Then the distance of any two vertices in $C\cap G_{R}$ is at least
$3$ in $C$, and the distance of any two vertices in $C\cap (G_{B}
\setminus (P^{2} \cup H_{B_{1}} \cup H_{B_{2}}))$ is at least $3$ in
$C$. Furthermore, there are at most $\frac{|C|}{3} + \frac{|C|}{3}$
vertices of $C$ belonging to $G' \setminus P^{2}$. This means that
there are at least $\frac{|C|}{3}$ vertices of $C$ belonging to
$P^{2}$. Note that $|P^{2} \cap G'| \leq 2(\ell - b) + 1 - (2\ell-4)
\leq 1$ and $|C| = 2\ell - 2$, which is impossible. This completes
the proof of Theorem~\ref{Thm:OddCycles}.
\end{proof}

\subsection*{Acknowledgements}

The authors are deeply thankful to the referees for their careful reading and thoughtful suggestions, leading to great improvements in the presentation of this work.

Data sharing not applicable to this article as no datasets were generated or analysed during the current study.


\end{document}